\newtheorem{Theorem}{Theorem}[section]
\newtheorem{Lemma}{Lemma}[section]
\newtheorem{Proposition}{Proposition}[section]
\newtheorem{Remark}{Remark}[section]
\newtheorem{example}{Example}[section]
\newproof{proof}{Proof}
\newproof{pot}{Proof of Theorem \ref{thm2}}
\newcommand{\bb}{\begin{bmatrix}}
\newcommand{\eb}{\end{bmatrix}}
\newcommand{\bl}[1]{\begin{list}{#1}{\usecounter{bean}}} \newcommand{\el}{\end{list}}
\newcommand{\bel}[1]{\begin{equation} \label{#1}} \newcommand{\eel}{\end{equation}}
\begin{document}

\date{}
\begin{frontmatter}
\title{
On the Sylvester-like matrix equation $AX+f(X)B=C$
\tnoteref{t1}}%,t2}}
\author{Chun-Yueh Chiang\corref{cor1}\fnref{fn1}}
\ead{chiang@nfu.edu.tw}
\address{Center for General Education, National Formosa
University, Huwei 632, Taiwan.}
%\author{Matthew M. Lin \corref{cor1}\fnref{fn2}}
%\ead{mlin@math.ccu.edu.tw}
%\address{Department of Mathematics, National Chung Cheng University, Chia-Yi 621, Taiwan.}

\cortext[cor1]{Corresponding author}
\fntext[fn1]{The author was supported
 by the Ministry of Science and Technology of Taiwan under grant
NSC 102-2115-M-150-002.}
%\fntext[fn2]{The second
%author was supported by the National Science Council of Taiwan under grant
%101-2115-M-194-007-MY3.}

\date{ }

\begin{abstract}
Many applications in applied mathematics and control theory give rise to the unique solution of a Sylvester-like matrix equation associated with an underlying structured matrix operator $f$. In this paper, we will discuss the solvability of the Sylvester-like matrix equation through an auxiliary standard (or generalized) Sylvester matrix equation. We also show that when this Sylvester-like matrix equation is uniquely solvable, the closed-form solutions can be found by using previous result. In addition, with the aid of the Kronecker product some useful results of the solvability of this matrix equation are provided.
\end{abstract}

\begin{keyword}
Sylvester matrix equation,\,Control theory,\,Closed-form solutions,\,Solvability,\,Laurent expansion,\,Relative characteristic polynomial
\MSC 15A06 \sep 15A24\sep 15A86%\sep 65F10\sep 65F05
\end{keyword}

\end{frontmatter}

\section{Introduction}
Problems in determining the solutions of a matrix equation are closely related to a wide range of
challenging scientific areas. Especially, many linear matrix equations are encountered in many applications of control and engineering problems \cite{Mehrmann91,KwaSiv1972}. There are various results solving many interesting particular cases of this topic. A detailed survey of this area can be found in \cite{Bini2012,Lancaster1995}. Useful results can also be found in \cite{Simoncini14}.
  In this paper, we consider the following  Sylvester-like matrix equation
%\begin{subequations}
\begin{align}\label{eq}
AX+f(X)B=C,
\end{align}
%\end{subequations}
where $A,\,B,\,C \in\mathbb{F}^{m\times m}$ are known matrices, and the $m$-square matrix $X$ is an unknown matrix to be determined. The transformation $f:\mathbb{F}^{m\times m}\rightarrow \mathbb{F}^{m\times m}$ is a
matrix operator enjoying the following suitable properties:
\begin{itemize}
\item[1.]Period-$2$: $f^{(2)}(A):=f(f(A))=A$,
\end{itemize}
and
\begin{itemize}
\item[2a.]Multiplication preserving: $f(AB)=f(A)f(B)$,
\end{itemize}
or
\begin{itemize}
\item[2b.]Multiplication reversing: $f(AB)=f(B)f(A)$,
\end{itemize}
for all $A$ and $B\in\mathbb{F}^{m\times m}$, where the base field $\mathbb{F}$ in these equations
varies between the real field $\mathbb{R}$ and complex field $\mathbb{C}$. Also, we assume that in both cases $f$ is a \emph{general} linear map through the paper, i.e., $f$ satisfies
\begin{itemize}
\item[1.]Additivity: $f(A+B)=f(A)+f(B),\quad A,\,B\in\mathbb{F}^{m\times m}$,
\end{itemize}
and
\begin{itemize}
\item[2.]\emph{General} homogeneous of degree 1:
\begin{align*}
f(a X)=s(a)f(X),\quad a\in\mathbb{F}, X\in\mathbb{F}^{m\times m},
\end{align*}
\end{itemize}
 where $s:\mathbb{F}\rightarrow\mathbb{F}$ is an injective scalar function over $\mathbb{F}$ associated with $f$.

  Assume that $f$  is a bijective linear operator (i.e., $s$ is the identity function). In \cite{emrl2006364}, $f$ is called a \emph{similarity transformation} when $f$ is multiplication reversing, and $f$ is called an \emph{anti-similarity transformation} when $f$ is multiplication preserving. Furthermore, a linear operator $f$ with property (2a) or (2b) implies that $f$ preserves invertibility. See Section~4 for details. %where $I_m$ denoted by the $m\times m$ identity matrix.

Over the past century the following Sylvester-like matrix equations have received considerable attention and have been the topic of many elegant and complete studies:
%\begin{subequations}\label{2}
\begin{itemize}
\item[1.]The standard Sylvester matrix equation
\begin{align}\label{SSylvester}
AX+XB=C,
\end{align}
corresponding to $f$ is the identity operator which is a multiplication preserving operator.

\item[2.]The conjugate Sylvester matrix equation
\begin{align}\label{CSylvester}
AX+\overline{X}B=C,
\end{align}
corresponding to $f$ is the conjugate operator which is a multiplication preserving operator.

\item[3.]The $\top$-Sylvester matrix equation
\begin{align}\label{TSylvester}
AX+X^\top B=C,
\end{align}
corresponding to $f$ is the transpose operator which is a multiplication reversing operator.

\item[4.]The $H$-Sylvester matrix equation
\begin{align}\label{HSylvester}
AX+X^H B=C,
\end{align}
corresponding to $f$ is the conjugate transpose operator which is a multiplication reversing operator.
\end{itemize}
%\end{subequations}
Here, $A,B$, and $C$ are matrices of the appropriate dimensions with entries in the complex or the real field in the corresponding matrix equation. We note that all matrix equations \eqref{SSylvester}--\eqref{HSylvester} are special cases of Sylvester-like matrix equation~\eqref{eq}. In particular, Sylvester-like matrix equations \eqref{SSylvester}--\eqref{HSylvester} arise in applications from a wide range of areas including control theory, signal processing, and partial differential equations. Also, in the field of applied linear algebra, these equations naturally arise in several applied contexts. For instance,
Eq.~\eqref{SSylvester} was first studied by J.J. Sylvester \cite{S84} in 1884 as a tool to reduce block-triangular matrices to block-diagonal form by similarity, and later the solution of Eq.~\eqref{SSylvester} has received a great deal of attention \cite{Horn1994}. In \cite{doi:10.1137/0609029}, Eq.~\eqref{CSylvester} was investigated and conditions for its consistency and solvability were deduced, using the consimilarity canonical form in \cite{Hong1988143}. The main application of Eq.~\eqref{CSylvester} arises from the study of consimilarity \cite{doi:10.1137/0609029,Hong1988143}. For recent surveys on consimilarity and the closely related topic of conjugate-normal matrices, please consult \cite{Fabbender20081425}. Eq.~\eqref{TSylvester} and Eq.~\eqref{HSylvester} are highly related to the problem of reducing block-anti-triangular matrices to block-anti-diagonal matrices by congruence \cite{doi:10.1137/050637601}. These two matrix equations have attracted increasing interest in the past few years, with recent works on the necessary and sufficient conditions for the solvability of Eqs.~\eqref{TSylvester}--\eqref{HSylvester}. See also more recent advanced developments in \cite{Simoncini14}.

 %1210
 It is worthwhile to mention that the Sylvester-likes matrix equations \eqref{SSylvester}--\eqref{HSylvester} have great relevance in control applications associated with the  continuous-time descriptor systems \cite{Mehrmann91,KwaSiv1972} and the vibration of fast trains \cite{Ipsen04,Hilliges2004}. Specifically, when some additional properties or connections are assumed on the coefficient matrices $A$ and $B$ as in Eq.~\eqref{SSylvester}, like being symmetric or Hermitian, the interest focuses on solutions enjoying some of these properties as well. For example, the continuous-time Lyapunov matrix equation can be obtained from the standard Sylvester matrix equation~\eqref{SSylvester} when $B=A^\top$ (or $A^H$). The continuous-time Lyapunov matrix equation has been discovered in many literature, due to the well-known role of this famous matrix equation in control \cite{Mehrmann91}. Another important application in the Sylvester-like matrix equation~\eqref{eq} with a multiplication reversing operator $f$ is related to the study of structured ($f$-palindromic) quadratic eigenvalue problems (we refer this as $f$-palindromic QEP) arising from the simulations of surface acoustic wave filter and the vibration of fast trains \cite{Chu2010,za06}:
\begin{align}\label{QEP}
\mathcal{Q}(\lambda)(x):= (\lambda^2 A_2+\lambda A_1+A_0)x=0,\quad A_{i}=f(A_{2-i}),
\end{align}
where both $A_i$ are $m$-square matrices, for $i=0,1,2$. Interestingly, all eigenvalues of $\mathcal{Q}(\lambda)$ come in $s$-reciprocal pairs $(\lambda,1/s(\lambda))$ if $f$ preserves invertibility since $$\det(\mathcal{Q}(\lambda))=\det(f(s^2(\lambda) A_0+s(\lambda) A_1+A_2)).$$
This result also regards $0$ and $\infty$ as reciprocals of each other. A standard approach for solving the $f$-palindromic QEP is to transform it into a $2m\times2m$ linearized eigenvalue problem. For more detail of this application, see Appendix.
%1210

In this work, we are particularly interested in the theoretical
solutions of the Sylvester-like matrix equation \eqref{eq} and its solvable condition. Following the idea in \cite{Zhou2011}, also a special case in Remark 2 in \cite{Li2014546}, or more general results in \cite{Chiang2014925}, we try to transform Eq.~\eqref{eq} into a standard (or generalized) Sylvester matrix equation. A sufficient condition for existence of the unique solution of the Sylvester-like matrix equation was established by means of elimination by addition or subtraction. Furthermore, we extend this result to the general Sylvester-like matrix equation~\eqref{geq}. The proposed conclusions may provide great convenience to the analysis of such a matrix equation. On the other hand, a theoretical solution of Eq.~\eqref{eq} was derived in explicit form  based on the so-called (relative) Cayley-Hamilton theorem. More precisely, the solution can be expressed in terms of the Laurent expansion of a regular matrix pencil when $f$ is multiplication reversing.
% 0203

Another issue will be addressed in the work which is closely related to the Kronecker product. Indeed, the Kronecker product is an important tool in the analysis and design of numerical algorithms to compute the solutions of Eq.~\eqref{eq}. Moreover, the linear bijective operator $f$ with property (2a) or (2b) can be characterized by a concrete map \cite{emrl2006364}. With the help of the Kronecker product, it give us the complete picture of the inclusion relationship between the solvable condition of  Eq.~\eqref{eq} and the spectral information about the matrix pencil in terms of matrices $A$ and $B$.

Throughout this paper, the symbol $(a_{ij})_{m\times n}$ stands for a $m\times n$ matrix $A$. We denote
  the $m\times m$ identity matrix by $I_m$, the transpose matrix of $A$ by $A^\top$, the conjugate matrix of $A$ by $\overline{A}$, the conjugate transpose matrix of $A$ by $A^H$, and use $\det(A)$ to denote the determinant of a square matrix $A$. The notation $A\otimes B:=(a_{ij}B)_{ms\times nt}$ is denoted by the Kronecker product for a matrix $A=(a_{ij})_{m\times n}$ and a $s\times t$ matrix $B$. Given a matrix pencil $A-\lambda B$ over field $F$, the matrix pencil $A-\lambda B$ is called regular if $\det(A-\lambda B) \neq 0$ for some $\lambda\in \mathbb{F}$.

The paper is organized as follows. We formulate the sufficient conditions for the existence of the solutions of Eq.~\eqref{eq} directly by means of the solvable analysis of the standard (or generalized) Sylvester matrix equation in Section~2. By using the (relative) Cayley-Hamilton theorem, closed-form solutions to a family of Sylvester-like matrix equations are presented in Section 3. A direct method applying the Kronecker product for solving Eq.~\eqref{eq1} is briefly discussed in Section 4. Finally, concluding remarks are given in Section 5.

%Throughout the paper, we use  $A^\top$, $\overline{A}$, $A^H$, and $\det(A)$  to denote the transpose, the conjugate transpose, the conjugate transpose, and the determinant of the matrix $A$, respectively.
%The notations $A\otimes B=[a_{ij}B]$ is denoted by the Kronecker product of matrices $A=[a_{ij}]$ and $B=[b_{ij}]$.
\section{A sufficient condition for the unique solvability of Eq.~\eqref{eq}}
To begin with, we first state the following result in \cite{Chu1987,Herndez1989333}, which is the necessary and sufficient condition for the solvability of the generalized Sylvester matrix equation
%\begin{subequations}
\begin{align}\label{CM}
AXD+CXB=E,
\end{align}
%\end{subequations}
where $A,\,B,\,C,\,D$ and $E$ are $m$-square matrices with entries in the complex or the real field.
\begin {Theorem}\label{Eric0}
Eq.~\eqref{CM} has a unique solution
if and only if two matrix pencils $\lambda C-A$ and $\lambda D-B$ are regular and the spectra of the pencils satisfy  $\sigma(\lambda C-A)\cap\sigma(\lambda D+B)=\phi$.
\end {Theorem}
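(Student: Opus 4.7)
The plan is to reduce Eq.~\eqref{CM} to a single linear system via the Kronecker product, and then analyze invertibility of the coefficient matrix by a simultaneous triangularization of the two pencils.

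First, applying $\mathrm{vec}(\cdot)$ together with the identity $\mathrm{vec}(PXQ)=(Q^{\top}\otimes P)\mathrm{vec}(X)$ converts Eq.~\eqref{CM} into
\begin{equation*}
M\,\mathrm{vec}(X)=\mathrm{vec}(E),\qquad M:=D^{\top}\otimes A+B^{\top}\otimes C.
\end{equation*}
Unique solvability for every right-hand side is equivalent to $\det M\neq 0$, so the theorem reduces to showing that $\det M\neq 0$ iff both pencils are regular and $\sigma(\lambda C-A)\cap\sigma(\lambda D+B)=\emptyset$.

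Assuming regularity of both pencils, I would invoke the generalized (complex) Schur decomposition to obtain unitary $P,Q$ with $PAQ=T_A$, $PCQ=T_C$ upper triangular, and unitary $U,V$ with $UBV=T_B$, $UDV=T_D$ upper triangular. Denote the diagonal pairs by $(\alpha_i,\gamma_i)$ and $(\beta_j,\delta_j)$, so the generalized eigenvalues of the two pencils are $\alpha_i/\gamma_i$ and $-\beta_j/\delta_j$ (with the usual $\infty$ convention). Substituting $A=P^{-1}T_AQ^{-1}$ and the analogous formulas into $M$ yields
\begin{equation*}
M=(V^{-\top}\otimes P^{-1})\bigl(T_D^{\top}\otimes T_A+T_B^{\top}\otimes T_C\bigr)(U^{-\top}\otimes Q^{-1}).
\end{equation*}
Because $T_D^{\top}$ and $T_B^{\top}$ are lower triangular, the central factor is block lower triangular (with $m\times m$ blocks) whose $j$-th diagonal block is $\delta_jT_A+\beta_jT_C$, itself upper triangular with diagonal entries $\{\alpha_i\delta_j+\gamma_i\beta_j\}_{i=1}^{m}$. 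Taking determinants therefore gives
\begin{equation*}
\det M=c\prod_{i,j}\bigl(\alpha_i\delta_j+\gamma_i\beta_j\bigr),\qquad c\neq 0,
\end{equation*}
and a short case analysis for the infinite-eigenvalue cases $\gamma_i=0$ or $\delta_j=0$ (using regularity to rule out both vanishing simultaneously) shows that this product is nonzero iff $\alpha_i/\gamma_i\neq-\beta_j/\delta_j$ for all $i,j$, which is exactly the disjointness condition $\sigma(\lambda C-A)\cap\sigma(\lambda D+B)=\emptyset$.

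It remains to verify that if one of the pencils, say $\lambda C-A$, is singular, then $M$ is singular too. For this I would appeal to the Kronecker canonical form: a nontrivial right singular block produces a polynomial vector $p(\lambda)=\sum_{k}\lambda^{k}p_{k}\not\equiv 0$ with $(\lambda C-A)p(\lambda)\equiv 0$, equivalently $Ap_{k}=Cp_{k-1}$ with appropriate boundary conditions on the chain $\{p_k\}$. Pairing such a chain with a suitable right eigenstructure of $\lambda D+B$ (to kill the $D$ and $B$ factors on the right) yields a nonzero $X$ satisfying $AXD+CXB=0$, so $M$ has nontrivial kernel. I expect the main obstacle to be precisely this last step: the forward direction is a direct determinant calculation once the simultaneous triangularization is in place, whereas constructing an explicit kernel element requires careful bookkeeping of Kronecker chain indices on both sides of $X$ when singular pencils are admitted.
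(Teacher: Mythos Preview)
The paper does not actually prove Theorem~\ref{Eric0}; it merely quotes the result from \cite{Chu1987,Herndez1989333} and uses it as a black box. So there is no ``paper's own proof'' to compare against. Your vectorization plus simultaneous generalized Schur reduction is precisely the classical argument behind the cited references (Chu's paper in particular), and your determinant computation for the sufficiency direction is correct as written.

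Where your proposal is thinner is exactly where you flag it: the necessity of regularity. Your sketch ``pair a right singular Kronecker chain of $\lambda C-A$ with a suitable right eigenstructure of $\lambda D+B$'' is on the right track, but the rank-one ansatz $X=p(\lambda_0)q^{\top}$ does not always suffice (e.g.\ when all finite eigenvalues of $\lambda D+B$ happen to sit at the zeros of $p$, or when the other pencil has only eigenvalues at infinity). What does work is to take the full chain: with $Ap_0=0$, $Ap_i=Cp_{i-1}$, $Cp_\varepsilon=0$, set $X=\sum_{i=0}^{\varepsilon}p_iq_i^{\top}$ and observe that $AXD+CXB=\sum_{j=0}^{\varepsilon-1}(Cp_j)\bigl(q_{j+1}^{\top}D+q_j^{\top}B\bigr)$, so it is enough to build vectors with $D^{\top}q_{j+1}=-B^{\top}q_j$. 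When $\lambda D-B$ is regular this chain can always be produced from the Weierstrass form of $\lambda D^{\top}+B^{\top}$ (use the Jordan part if $D$ is invertible, the nilpotent part otherwise); when $\lambda D-B$ is itself singular, a left singular chain on that side plays the same role. For square pencils a singular $\lambda C-A$ always carries at least one right singular block (the row and column minimal indices come in equal numbers), so your starting point is legitimate. With these details filled in, your argument is complete and matches the literature proofs that the paper is citing.
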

\begin{Remark}
A special mention should be paid to the case of the standard Sylvester equation~\eqref{SSylvester} whenever
$C$ and $D$ are identity matrices in Eq.~\eqref{CM}, yielding the solvability condition of Eq.~\eqref{SSylvester} will be shown to be the well-known condition
\[
\sigma(A) \cap \sigma(-B)=\phi.
\]
\end{Remark}
A standard way to solve a given matrix equation is to simplify it, by applying favorable
transformations to the unknowns or to the coefficient matrices. In order to transform Eq.~\eqref{eq} into a standard (or generalized) Sylvester matrix equation, the elimination by addition or subtraction will be treated. Taking into account Theorem~\ref{Eric0}, the condition for the existence of the unique solution of Eq.~\eqref{eq} could be explained through some associated generalized Sylvester equations.

Following the idea in [17], the first major step is to link the Eq.~\eqref{eq} to an equivalent
equation if $f$ is multiplication preserving. Applying the matrix operator $f$ to both sides of Eq.~\eqref{eq} we have
 \begin{align}\label{eq1}
 f(A)f(X)+Xf(B)=f(C).
 \end{align}
Pre-multiplying Eq.~\eqref{eq} with $f(A)$ and post-multiplying Eq.~\eqref{eq1} with $-B$, adding two resulting equations to produce the following standard Sylvester matrix equation
\begin{align}\label{SS1}
X(f(B)B)-(f(A)A)X=f(C)B-f(A)C.
\end{align}

Turning now to the case of the multiplication reversing operator $f$, applying matrix operator $f$ to both sides of Eq.~\eqref{eq}
 \begin{align}\label{eq2}
 f(B)X+f(X)f(A)=f(C).
 \end{align}
Again, we should try to get a standard Sylvester equation from Eq.~\eqref{eq} and Eq.~\eqref{eq2}.
However, it is seemingly difficult to eliminate the last term of the left hand side of Eq.~\eqref{eq} or Eq.~\eqref{eq2}. Inspired by the idea of \cite{Zhou09}[Lemma 9], an elegant matrix identity has been introduced as an approach to derive the solvable condition of a specific matrix equation. Given a regular matrix pencil $\mathcal{A}-\lambda\mathcal{B}$, we now define the so-called resolvent matrix operator $Z_\gamma$ such that $Z_\gamma(\mathcal{A},\mathcal{B}):=(\mathcal{A}+\gamma\mathcal{B})^{-1}$ with a suitable parameter $\gamma$ , it is easy to shown that the following result.
\begin{Proposition}\label{swap}
Under the given hypothesis that $\mathcal{A}-\lambda\mathcal{B}$ is regular, then
\[
\mathcal{A}Z_\gamma(\mathcal{A},\mathcal{B})\mathcal{B}=\mathcal{B}Z_\gamma(\mathcal{A},\mathcal{B})\mathcal{A}
\]
with a suitable $\gamma$.
\end{Proposition}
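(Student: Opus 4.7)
The plan is to first verify that a suitable $\gamma$ exists, and then prove the identity by a single algebraic trick: write $\mathcal{A}$ as the difference $(\mathcal{A}+\gamma\mathcal{B}) - \gamma\mathcal{B}$, so that when it is multiplied against the resolvent $(\mathcal{A}+\gamma\mathcal{B})^{-1}$, the first piece cancels and only a symmetric $\mathcal{B}$-term remains.

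For the choice of $\gamma$: since the pencil $\mathcal{A}-\lambda\mathcal{B}$ is regular, $\det(\mathcal{A}-\lambda\mathcal{B})$ is a nonzero polynomial in $\lambda$, hence $\det(\mathcal{A}+\gamma\mathcal{B})\neq 0$ for all but finitely many $\gamma\in\mathbb{F}$. Pick any such $\gamma$; then $Z_\gamma(\mathcal{A},\mathcal{B})=(\mathcal{A}+\gamma\mathcal{B})^{-1}$ is well defined. (Over a finite field one should check that at least one admissible $\gamma$ lies in $\mathbb{F}$, but since the paper works over $\mathbb{R}$ or $\mathbb{C}$ this is automatic.)

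For the identity itself, I would first substitute $\mathcal{A}=(\mathcal{A}+\gamma\mathcal{B})-\gamma\mathcal{B}$ on the left factor of the left-hand side:
\begin{align*}
\mathcal{A}\,Z_\gamma(\mathcal{A},\mathcal{B})\,\mathcal{B}
   &= \bigl[(\mathcal{A}+\gamma\mathcal{B})-\gamma\mathcal{B}\bigr]\,(\mathcal{A}+\gamma\mathcal{B})^{-1}\,\mathcal{B}
    = \mathcal{B}-\gamma\,\mathcal{B}\,Z_\gamma(\mathcal{A},\mathcal{B})\,\mathcal{B}.
\end{align*}
Then perform the analogous substitution on the right factor of the right-hand side:
\begin{align*}
\mathcal{B}\,Z_\gamma(\mathcal{A},\mathcal{B})\,\mathcal{A}
   &= \mathcal{B}\,(\mathcal{A}+\gamma\mathcal{B})^{-1}\,\bigl[(\mathcal{A}+\gamma\mathcal{B})-\gamma\mathcal{B}\bigr]
    = \mathcal{B}-\gamma\,\mathcal{B}\,Z_\gamma(\mathcal{A},\mathcal{B})\,\mathcal{B}.
\end{align*}
The two expressions are identical, which establishes the proposition.

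There is no real obstacle here; the entire content of the proof is spotting the decomposition $\mathcal{A}=(\mathcal{A}+\gamma\mathcal{B})-\gamma\mathcal{B}$ and applying it on the correct side of the resolvent in each of the two products. If anything, the only item worth mentioning explicitly in a writeup is the justification that an admissible $\gamma$ exists, which is exactly where regularity of the pencil is used.
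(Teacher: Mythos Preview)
Your proof is correct and uses essentially the same idea as the paper: both arguments rewrite one of the outer factors as $(\mathcal{A}+\gamma\mathcal{B})$ minus the other, so that the resolvent cancels and a symmetric expression remains. The only cosmetic difference is that you decompose $\mathcal{A}$ and reduce both sides to $\mathcal{B}-\gamma\,\mathcal{B}Z_\gamma\mathcal{B}$, whereas the paper decomposes $\gamma\mathcal{B}$ and reduces to $\mathcal{A}-\mathcal{A}Z_\gamma\mathcal{A}$ (which forces a separate trivial check when $\gamma=0$); your version happens to avoid that case split.
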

\begin{proof}
The result follows immediately if $\gamma=0$. Otherwise, since
\begin{align*}%\label{swap}
&\gamma\mathcal{B}(\mathcal{A}+\gamma \mathcal{B})^{-1} \mathcal{A}= \mathcal{A} -\mathcal{A}(\mathcal{A}+\gamma\mathcal{B})^{-1} \mathcal{A}
= \mathcal{A}(\mathcal{A}+\gamma\mathcal{B})^{-1}\gamma \mathcal{B},%\label{ZZstar},
\end{align*}
the result also holds in this situation.
\end{proof}
Suppose that $B-\lambda f(A)$ is regular, we choose a scalar $\lambda_0$ such that $B+\lambda_0 f(A)$ is nonsingular.
Multiplying Eq.~\eqref{eq} with $Z_{\lambda_0}(B,f(A))f(A)$  on the right and multiplying the latter equation \eqref{eq2} with $-Z_{\lambda_0}(B,f(A))B$ on the right, adding two resulting equations and applying Proposition~\ref{swap}  yields that the generalized Sylvester matrix equation
\begin{align}\label{SS2}
A{\bf X} Z_{\lambda_0} (B,f(A))f(A)-f(B){\bf X}Z_{\lambda_0} (B,f(A))B
=E,
\end{align}% Note that $B-\lambda_0 f(A)$ is nonsingular implies that $f(B)-\lambda_0 A$ is invertible.
where $E=CZ_{\lambda_0} (B,f(A))f(A)-f(C)Z_{\lambda_0} (B,f(A))B$. Subsequently, from Eq.~\eqref{SS1}, Eq.~\eqref{SS2} and Theorem~\ref{Eric0} the main result in this section can be summarized in the following theory:
\begin{Theorem}\label{unique}
A sufficient condition for the unique solvability of  Eq.~\eqref{eq} is
\begin{itemize}
\item[a.] $\sigma(Af(A)) \cap \sigma(Bf(B)) =\phi$ if $f$ is multiplication preserving,
\end{itemize}
and
\begin{itemize}
\item[b.] $\sigma(A-\lambda f(B)) \cap \sigma(B-\lambda f(A)) =\phi$ for two regular matrix pencils $A-\lambda f(B)$ and $B-\lambda f(A)$ if $f$ is multiplication reversing.
\end{itemize}
\end{Theorem}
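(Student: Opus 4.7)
The plan is to invoke Theorem~\ref{Eric0} on the reduced equations~\eqref{SS1} and~\eqref{SS2} already produced above, and then to upgrade unique solvability of the reduced equation to unique solvability of Eq.~\eqref{eq} itself by a finite-dimensional linear-algebra argument. For case~(a), Eq.~\eqref{SS1} is already a standard Sylvester equation in the unknown $X$ with coefficients $-f(A)A$ and $f(B)B$. Applying the remark following Theorem~\ref{Eric0} gives unique solvability of Eq.~\eqref{SS1} iff $\sigma(-f(A)A)\cap\sigma(-f(B)B)=\phi$, equivalently $\sigma(f(A)A)\cap\sigma(f(B)B)=\phi$. Using the elementary fact that $\sigma(PQ)=\sigma(QP)$ for square matrices of the same order, this coincides with the stated condition $\sigma(Af(A))\cap\sigma(Bf(B))=\phi$.

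For case~(b), Eq.~\eqref{SS2} is already in the generalized Sylvester form~\eqref{CM} with the four coefficient matrices identified as $A,\,-f(B),\,Z_{\lambda_0}(B,f(A))f(A),\,Z_{\lambda_0}(B,f(A))B$. I apply Theorem~\ref{Eric0} and check that its regularity and spectral conditions translate into condition~(b). Since $Z_{\lambda_0}(B,f(A))$ is invertible it may be factored out of the second pencil, which reduces to $\lambda f(A)-B$, regular iff $B-\lambda f(A)$ is; the first pencil is $-\lambda f(B)-A$, regular iff $A-\lambda f(B)$ is (via the substitution $\lambda\mapsto-\lambda$). The same sign change absorbs into the spectra and rewrites the disjointness condition in Theorem~\ref{Eric0} as $\sigma(A-\lambda f(B))\cap\sigma(B-\lambda f(A))=\phi$, which is precisely condition~(b).

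The main obstacle is that the derivations of Eqs.~\eqref{SS1} and~\eqref{SS2} only establish the one-sided implication that a solution of Eq.~\eqref{eq} is a solution of the reduced equation, so what is obtained directly is uniqueness, not existence. To close this gap I would note that $\mathcal{L}:X\mapsto AX+f(X)B$ is additive and, by the homogeneity hypothesis on $f$, $\mathbb{R}$-linear on the finite-dimensional space $\mathbb{F}^{m\times m}$, whence $\mathcal{L}$ is injective iff surjective. Applying the reduction to the homogeneous equation $AX+f(X)B=0$ shows that any element of $\ker\mathcal{L}$ satisfies the reduced equation with zero right-hand side, whose unique solution under the hypothesis is $X=0$. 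Hence $\mathcal{L}$ is injective and therefore bijective, so Eq.~\eqref{eq} has a unique solution for every $C$.
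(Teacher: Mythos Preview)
Your argument is correct and follows the paper's own route: derive the reduced equations~\eqref{SS1} and~\eqref{SS2} and then invoke Theorem~\ref{Eric0} on them. In fact you are more careful than the paper, which simply states the conclusion after~\eqref{SS2} without spelling out the injective-implies-bijective step for the operator $\mathcal{L}$ that you correctly supply to upgrade uniqueness of the reduced equation to genuine unique solvability of Eq.~\eqref{eq}.
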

Theorem~\ref{unique} provides a sufficient condition on the existence of the unique solution to Eq.~\eqref{eq}.
It is interesting to ask whether this sufficient condition is also the necessary condition for the uniqueness of solution of Eq.~\eqref{eq}. Unfortunately, it is disappointed with the answer from the following counterexample.
\begin{example}
Consider the scalar equation ($m=1$) in the form of Eq.~\eqref{eq} with
\begin{align*}
ax+f(x)b=c, \quad a,b,c\in\mathbb{R}.
\end{align*}
Let $f$ be the identity map and $a=b=1$. Of course, $f$ is also the transpose operator. It is clear that the scalar equation $x+x=c $ has a unique solution $x=c/2$. However, both conditions (a) and (b) in Theorem~\ref{unique} are not satisfied.
\end{example}
%0128
Clearly, any solution of Eq.~(1) is also a solution of Eq.~\eqref{SS1} (or Eq.~\eqref{SS2}), but the converse statement is not true. It seems that we need additional information about $f$ to determine the necessary condition on the existence of the unique solution to Eq.~\eqref{eq}.

It is interesting and exciting that the different property of $f$ makes the
equation behave very differently. The solvable condition in terms of non-intersection of the spectra $\sigma(A)$ and $\sigma(B)$, for the standard Sylvester equation~\eqref{SSylvester}, is shifted to condition (b) in Theorem~\ref{unique} for the generalized spectrum $\sigma(A,B^\top)$. In
addition, $\top$-Sylvester matrix equation~\eqref{TSylvester} looks like a Sylvester equation associated with continuous-time but condition (b) in Theorem~\ref{unique} is satisfied when $\sigma(A,B^\top)$ in totally inside the unit circle, hinting at a discrete-time type of stability behavior.
\begin{Remark}
\noindent\begin{itemize}
\item[1.]
If $f$ preserves invertibility, $\lambda\in\sigma(A-\lambda f(B))$ implies that $1/s(\lambda)\in\sigma(B-\lambda f(A))$ since $\det(B-\lambda f(A))=\det(f(f(B)-s(\lambda)A))$. Then, the condition (b) in Theorem~\ref{unique}  can be rewritten as the so-called ``$s$-reciprocal free'': The $s$-reciprocal pair $(\lambda,1/s(\lambda))$ cannot belong to $\sigma(A-\lambda f(B))$, this definition also regards $0$ and $\infty$ as reciprocals of each other.
\item[2.]
The sufficient condition for the existence of the unique solution of the following generalized Sylvester matrix equation
\begin{align}\label{geq}
AXD+Ef(X)B=C,
\end{align}
can be obtained in the similar manner as mentioned before. We state the result in the following without the proof.
\begin{Theorem}\label{gunique}
 %For the sake of simplicity we define two matrix operators $U_{\gamma}(A,B):=AZ_{\gamma}(A,B)$ and $V_{\gamma}(A,B):=Z_{\gamma}(A,B)A$ with matrices $A$ and $B$, and a scalar $\gamma$.
 With two suitable parameters $\gamma_1$ and $\gamma_2$, a sufficient condition for the unique solvability of  Eq.~\eqref{geq} is
\begin{itemize}
\item[a.] $\sigma(f(A) Z_{\gamma_1}(E,f(A))A-\lambda E Z_{\gamma_1}(E,f(A))f(E)) \cap\, \sigma(f(B) Z_{\gamma_2}(B,f(D))B-\lambda D Z_{\gamma_2}(B,f(D))f(D)) =\phi$ when $f$ is multiplication preserving. Moreover, the condition can be reduced to
     \[
     \sigma(f(A)A-\lambda Ef(E)) \cap \sigma(f(B)B-\lambda Df(D))=\phi
     \]
     if $E$ and $f(A)$ are commuting,  $B$ and $f(D)$ are commuting,
\end{itemize}
and
\begin{itemize}
\item[b.] $\sigma(f(D) Z_{\gamma_1}(E,f(D))A-\lambda E Z_{\gamma_1}(E,f(D))f(B)) \cap\, \sigma(f(E) Z_{\gamma_2}(B,f(A))B-\lambda D Z_{\gamma_2}(B,f(A))f(A)) =\phi$ when $f$ is multiplication reversing.
     Moreover, the condition can be reduced to
     \[
     \sigma(f(D)A-\lambda Ef(B)) \cap \sigma(f(E)B-\lambda Df(A))=\phi
     \]
     if $E$ and $f(D)$ are commuting,  $B$ and $f(A)$ are commuting.
 \end{itemize}
All matrix pencils mentioned above are assumed to be regular.
\end{Theorem}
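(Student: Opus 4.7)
The plan is to mimic the two-step strategy used for Theorem \ref{unique}: first apply $f$ to Eq.~\eqref{geq} to obtain a companion equation, then combine the original and the companion to eliminate the $f(X)$-term, arriving at a generalized Sylvester equation of the form $\tilde{A}X\tilde{D}+\tilde{C}X\tilde{B}=\tilde{E}$ to which Theorem \ref{Eric0} applies directly. The new feature compared with Eq.~\eqref{eq} is that $f(X)$ now carries factors on \emph{both} sides (namely $E$ and $B$ in the original, and factors built from $f(A),f(D)$ in the companion), so elimination requires swapping a left factor \emph{and} a right factor simultaneously. I would handle this by two independent applications of Proposition \ref{swap}, governed by parameters $\gamma_1$ and $\gamma_2$.

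For case (a), where $f$ is multiplication preserving, applying $f$ to Eq.~\eqref{geq} yields $f(A)f(X)f(D)+f(E)Xf(B)=f(C)$. Setting $Z_1:=Z_{\gamma_1}(E,f(A))$ and $Z_2:=Z_{\gamma_2}(B,f(D))$, I would premultiply Eq.~\eqref{geq} by $f(A)Z_1$ and postmultiply by $Z_2 f(D)$, and premultiply the companion equation by $EZ_1$ and postmultiply by $Z_2 B$. Proposition \ref{swap} then supplies the two key identities $f(A)Z_1 E=EZ_1 f(A)$ and $BZ_2 f(D)=f(D)Z_2 B$, which together force both $f(X)$-middle terms to coincide with $EZ_1 f(A)f(X)f(D)Z_2 B$. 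Subtracting cancels $f(X)$ and produces
\[
(f(A)Z_1 A)X(DZ_2 f(D))-(EZ_1 f(E))X(f(B)Z_2 B)=\tilde{E},
\]
to which Theorem \ref{Eric0} immediately delivers the disjoint-spectrum condition stated in part (a) (up to the standard sign convention for writing pencils).

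Case (b) is structurally identical: when $f$ is multiplication reversing the companion equation reads $f(D)f(X)f(A)+f(B)Xf(E)=f(C)$, and one now takes $Z_1:=Z_{\gamma_1}(E,f(D))$ and $Z_2:=Z_{\gamma_2}(B,f(A))$, swapping $E\leftrightarrow f(D)$ on the left and $B\leftrightarrow f(A)$ on the right; the bookkeeping is otherwise unchanged. For the reduced forms, observe that under the commutativity hypotheses each $Z_i$ is a rational function of two commuting matrices and hence commutes with each of them, so it can be pulled out of the relevant pencil as a common nonsingular multiplier (on the left in one pencil and on the right in the other). The spectra of the $Z$-laden pencils therefore coincide with those of the simpler pencils displayed in the statement, giving the asserted reductions.

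The main obstacle I anticipate is purely bookkeeping: one must place the two resolvents on precisely the right sides and verify that the two Proposition~\ref{swap} identities align so that the $f(X)$-middle terms really do coincide before subtraction. No new conceptual ingredient beyond Theorem \ref{unique} is required, which is presumably why the author states the result without proof; regularity of the auxiliary pencils, needed for Theorem \ref{Eric0} to apply, is postulated in the statement.
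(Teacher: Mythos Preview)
Your proposal is correct and follows precisely the route the paper intends: the authors explicitly state the theorem ``without the proof,'' noting only that it ``can be obtained in the similar manner as mentioned before,'' i.e., by mimicking the derivation of Theorem~\ref{unique}. Your two-sided use of Proposition~\ref{swap} (with independent resolvents $Z_{\gamma_1}$ on the left and $Z_{\gamma_2}$ on the right) to make the $f(X)$-terms coincide, followed by subtraction and an appeal to Theorem~\ref{Eric0}, is exactly that derivation; the commutativity reductions via factoring out the nonsingular $Z_i$ are likewise the intended argument.
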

\end{itemize}
\end{Remark}
\section{The closed-form solutions of Eq.~\eqref{eq}}
In the past few decade, Eqs.~\eqref{SSylvester}--\eqref{CSylvester} have the unique solution under certain conditions in terms of the coefficient matrices $A$ and $B$, with available elegant and explicit closed-forms \cite{Jameson1968,6595170}. So far, the closed-form solution of Eq.~\eqref{eq} has been established if $f$ is multiplication preserving, but has not been explored when $f$ is multiplication reversing.  In each case, we will present a closed-form solution of Eq.~\eqref{eq} in this section. This closed-form solution is expressed in terms of the coefficient matrix of Eq.~\eqref{eq}.

If $f$ is multiplication preserving, Jameson \cite{Jameson1968} provides an explicit solution of Eq.~\eqref{SSylvester}
by the approach of characteristic polynomial of matrix $A$ (or matrix $B$). Based on the assumption in (a) of Theorem~\ref{unique} , together with Eq.~\eqref{eq1} the following result is an immediate consequence of \cite{Jameson1968}.
\begin{Theorem}\label{explicitthm}
Assume that the sufficient condition (a) in Theorem~\ref{unique} holds. Let $\mathcal{A}:=f(A)A$, $\mathcal{B}:=f(B)B$ and $\mathcal{C}:=f(C)B-f(A)C$ in Eq.~\eqref{SS1}. We denote the characteristic polynomial of $\mathcal{A}$ by $\mbox{ch}_{\mathcal{A}}(\lambda):=\det(\lambda I_m-\mathcal{A})=\sum\limits_{k=0}^m p_k \lambda^k$. Then, the matrix $\mbox{ch}_{\mathcal{A}}(\mathcal{B})$  is nonsingular. Moreover, the unique solution $X$ of Eq.~\eqref{eq} has the following explicit form,
\begin{align*}\
X=\left(\sum\limits_{i=1}^{m}\sum\limits_{k=1}^{i-1} p_i \mathcal{A}^{k} \mathcal{C} \mathcal{B}^{i-k-1}\right)\mbox{ch}_{\mathcal{A}}(\mathcal{B})^{-1}.
\end{align*}
Alternatively, we express the characteristic polynomial of $\mathcal{B}$ as $\mbox{ch}_{\mathcal{B}}(\lambda)=\sum\limits_{k=0}^m q_k \lambda^k$. Then, the matrix $\mbox{ch}_{\mathcal{B}}(\mathcal{A})$ is also nonsingular and $X$ can be presented by the following  explicit solution,
\begin{align*}\
X=-\mbox{ch}_{\mathcal{B}}(\mathcal{A})^{-1}\left(\sum\limits_{i=1}^{m}\sum\limits_{k=1}^{i-1} q_i \mathcal{A}^{k} \mathcal{C} \mathcal{B}^{i-k-1}\right).
\end{align*}
\end{Theorem}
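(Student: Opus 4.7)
My plan is to reduce the problem to the standard Sylvester equation already derived in the paper, namely equation~\eqref{SS1}, which I would rewrite as
\[
\mathcal{A}X - X\mathcal{B} = -\mathcal{C},
\]
and then invoke the classical Jameson closed form. The first task is to verify that the spectral hypothesis $\sigma(\mathcal{A})\cap\sigma(\mathcal{B})=\phi$ holds. This follows at once from condition~(a) of Theorem~\ref{unique} together with the elementary identity $\sigma(PQ)=\sigma(QP)$ for square matrices: $\sigma(\mathcal{A})=\sigma(f(A)A)=\sigma(Af(A))$ and similarly $\sigma(\mathcal{B})=\sigma(Bf(B))$. Consequently the eigenvalues of $\mathcal{B}$ avoid the roots of $\mbox{ch}_{\mathcal{A}}$, so $\mbox{ch}_{\mathcal{A}}(\mathcal{B})$ is nonsingular; the nonsingularity of $\mbox{ch}_{\mathcal{B}}(\mathcal{A})$ is symmetric.

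For the explicit formula I would run the standard Jameson/Cayley--Hamilton computation. Iterating the Sylvester relation produces, for every $i\ge 0$,
\[
\mathcal{A}^i X - X\mathcal{B}^i \;=\; \sum_{k=0}^{i-1}\mathcal{A}^{k}(\mathcal{A}X-X\mathcal{B})\mathcal{B}^{i-1-k} \;=\; -\sum_{k=0}^{i-1}\mathcal{A}^{k}\mathcal{C}\,\mathcal{B}^{i-1-k}.
\]
Weighting the $i$-th identity by $p_i$ and summing from $i=0$ to $m$, the left-hand side collapses to $\mbox{ch}_{\mathcal{A}}(\mathcal{A})\,X - X\,\mbox{ch}_{\mathcal{A}}(\mathcal{B})$. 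By Cayley--Hamilton the first term vanishes, and right-multiplication by $\mbox{ch}_{\mathcal{A}}(\mathcal{B})^{-1}$ yields the claimed formula. The companion formula based on $\mbox{ch}_{\mathcal{B}}$ is obtained by repeating the same manipulation with the characteristic polynomial of $\mathcal{B}$ and left-multiplying by $\mbox{ch}_{\mathcal{B}}(\mathcal{A})^{-1}$.

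The genuine obstacle is the subtle point flagged by the paper immediately before the theorem: a solution of \eqref{SS1} is not automatically a solution of \eqref{eq}, because \eqref{SS1} was obtained after applying $f$ and combining. I would close this gap by appealing to Theorem~\ref{unique}(a), which under condition~(a) already guarantees the existence and uniqueness of a solution to \eqref{eq}. That solution necessarily satisfies \eqref{SS1} by construction, and since the disjoint-spectra condition also makes the solution of \eqref{SS1} unique (via Theorem~\ref{Eric0} with $C=D=I_m$), the two solutions coincide. Hence the Jameson formula applied to \eqref{SS1} indeed outputs the unique solution of \eqref{eq}.
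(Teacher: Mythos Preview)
Your argument is correct and is exactly the approach the paper takes: the paper simply states that the result is ``an immediate consequence of \cite{Jameson1968}'' applied to the auxiliary Sylvester equation~\eqref{SS1}, and you have spelled out that Jameson/Cayley--Hamilton computation in full, together with the uniqueness bridge between \eqref{SS1} and \eqref{eq}. One small remark: your telescoping identity yields the inner sum over $k=0,\dots,i-1$, whereas the displayed formula in the theorem has $k=1,\dots,i-1$; this is a typo in the statement rather than a flaw in your derivation.
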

Now we should pay attention to finding an explicit expression of the solution $X$ of Eq.~\eqref{eq} when $f$ is multiplication reversing. The original idea is analogous to the derivations of an explicit solution of the
generalized Sylvester matrix equation~\eqref{CM} in \cite{Herndez1989333}. To obtain the result, the following preliminary result is needed. The following theorem in \cite{Langenhop1971329,Schweitzer1993237} gives a special expansion of a regular matrix pair $A-\lambda B$ with respect to $\lambda$, which plays a crucial role in this section.
\begin{Theorem}\label{Laurent}
Given two $m \times m$ matrices $D$ and $E$. If the matrix pencil
$D-\lambda E$ is regular, then there exists a nonnegative integer $\mu$ and $m\times m$ matrices $U_k$ such that
\begin{align}\label{LaurentE}
(D-\lambda E)^{-1}=\frac{1}{\lambda}\sum\limits_{k=-\mu}^\infty U_k \lambda^{-k},
\end{align}
for all scalar $\lambda$ in a deleted neighborhood of zero $B_\delta (0)\backslash \{0\}$, where $\delta>0$.
\end{Theorem}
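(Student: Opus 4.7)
The plan is to reduce the theorem to a routine Laurent expansion of a meromorphic matrix function by performing the change of variable $z = 1/\lambda$. The key algebraic identity is
\begin{align*}
D - \lambda E = \tfrac{1}{z}(zD - E), \qquad \text{hence} \qquad (D - \lambda E)^{-1} = z\,(zD - E)^{-1}.
\end{align*}
Thus the sought expansion in powers of $\lambda^{-1}$ is obtained directly from an ordinary Laurent expansion of $(zD - E)^{-1}$ about $z = 0$, and all the work is in analyzing the auxiliary pencil $zD - E$.

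First I would verify that the pencil $zD - E$ is again regular in $z$; this is immediate from $\det(zD - E) = z^{m}\det(D - z^{-1}E)$ together with the hypothesis that $\det(D - \lambda E)$ is a nonzero polynomial in $\lambda$ of degree at most $m$. Next, the adjugate formula
\begin{align*}
(zD - E)^{-1} = \frac{\mathrm{adj}(zD - E)}{\det(zD - E)}
\end{align*}
exhibits $(zD - E)^{-1}$ as a rational matrix function of $z$. Let $\mu$ denote the multiplicity of $0$ as a root of $\det(zD - E)$, which is a finite nonnegative integer by regularity. Then $z = 0$ is either a removable singularity (when $E$ is nonsingular and $\mu = 0$) or an isolated pole of order at most $\mu$. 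Invoking the classical Laurent expansion theorem produces matrices $U_k$ and some $\delta > 0$ for which $(zD - E)^{-1} = \sum_{k=-\mu}^{\infty} U_k z^{k}$ on $0 < |z| < \delta$; multiplying by $z$ and substituting back $z = 1/\lambda$ yields the claimed expansion.

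The main step is really just recognizing the correct change of variable; once the auxiliary pencil $zD - E$ is in hand, the result reduces to the textbook fact that a rational function admits a Laurent expansion about any isolated singularity. An alternative, more structural derivation is to invoke the Weierstrass--Kronecker canonical form of the regular pencil $D - \lambda E$: one writes $P(D - \lambda E)Q = \mathrm{diag}(I - \lambda J,\; N - \lambda I)$ with $J$ carrying the finite eigenvalues and $N$ nilpotent, so that $(D - \lambda E)^{-1}$ block-decomposes into an analytic part (at $z = 0$) plus an explicit finite nilpotent sum contributed by the infinite eigenstructure. This route makes both the integer $\mu$ and the matrices $U_k$ explicit, at the price of extra setup. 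In either approach I expect no serious obstacle beyond bookkeeping.
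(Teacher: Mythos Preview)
The paper does not prove this theorem; it is quoted from the references \cite{Langenhop1971329,Schweitzer1993237} as a preliminary tool, so there is no in-paper argument to compare against. Your derivation via the substitution $z=1/\lambda$ and the adjugate formula is correct and self-contained, and the alternative route through the Weierstrass--Kronecker form is equally standard and, as you note, has the advantage of making $\mu$ and the $U_k$ explicit.

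One point worth flagging: your expansion of $(zD-E)^{-1}$ is valid on an annulus $0<|z|<\delta$, which after substituting back $z=1/\lambda$ corresponds to $|\lambda|>1/\delta$, i.e.\ a deleted neighborhood of $\infty$ rather than the deleted neighborhood of $0$ written in the statement. This is not a flaw in your reasoning --- the series $\tfrac{1}{\lambda}\sum_{k\ge -\mu}U_k\lambda^{-k}$ contains arbitrarily high negative powers of $\lambda$ and therefore cannot converge near $\lambda=0$ unless almost all $U_k$ vanish --- so the mismatch reflects a misprint in the paper's statement, and the domain you obtain is the mathematically correct one. The way the expansion is actually used later in the paper (equating coefficients of like powers of $\lambda^{-1}$ in \eqref{compar}) is insensitive to this issue.
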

The analytic expression of $(D-\lambda E)^{-1}$ in \eqref{LaurentE} is called the Laurent expansion of $(D-\lambda E)^{-1}$ about $\lambda=0$, see \cite{Rothblum198133} for the further work. We denote the so-called ``relative characteristic polynomial'' of the pencil $D-\lambda E$ by $\mbox{ch}_{D,E}(\lambda):=\det(D-\lambda E)$. The following result can be regard as the generalization of the Cayley-Hamilton theorem; see, e.g., \cite{4047765}.
\begin{Lemma}\label{Coro}
If we write the expression of the relative characteristic polynomial of the $m\times m$ matrix pencil $D-\lambda E$ in ascending power in $\lambda$ as $\mbox{ch}_{D,E}(\lambda)=\sum\limits_{j=0}^m p_j \lambda^j$, then
\[
\mbox{ch}_{D,E}(U_k):=\sum\limits_{j=0}^m p_j U_{k+j-m}= 0,\quad  \mbox{for } k\geq m \mbox{ or } k\leq -1
\]
, where the sequence of matrices $\{U_j\}$ is defined in the infinite matrix series  \eqref{LaurentE} of Theorem~\ref{Laurent}.
\end{Lemma}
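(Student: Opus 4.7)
The plan is to prove the lemma by combining the adjugate-determinant identity with the Laurent expansion from Theorem~\ref{Laurent}, and then matching coefficients of $\lambda$.

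First, I would start from the classical cofactor identity
\[
(D-\lambda E)\,\mathrm{adj}(D-\lambda E) = \det(D-\lambda E)\,I_m = \mathrm{ch}_{D,E}(\lambda)\, I_m,
\]
which, under the regularity hypothesis on $D-\lambda E$, gives (for $\lambda$ in a punctured neighborhood of $0$ avoiding the zeros of $\mathrm{ch}_{D,E}$) the key relation
\[
(D-\lambda E)^{-1}\, \mathrm{ch}_{D,E}(\lambda) = \mathrm{adj}(D-\lambda E).
\]
The point is that the right-hand side is a polynomial in $\lambda$ of degree at most $m-1$. Hence, its Laurent coefficients at $\lambda=0$ vanish for every exponent strictly less than $0$ and for every exponent greater than or equal to $m$.

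Next, I would substitute the Laurent expansion \eqref{LaurentE} into the left-hand side and multiply it against the polynomial $\mathrm{ch}_{D,E}(\lambda) = \sum_{j=0}^{m}p_j\lambda^j$. Formally,
\[
(D-\lambda E)^{-1}\mathrm{ch}_{D,E}(\lambda)
= \sum_{k=-\mu}^{\infty}\sum_{j=0}^{m} p_j U_k\,\lambda^{\,j-k-1}.
\]
Since the Laurent series converges absolutely in a deleted neighborhood of $0$ and the polynomial factor has only finitely many nonzero terms, this rearrangement is legitimate. Collecting the coefficient of $\lambda^n$ on the left amounts to summing over those $(k,j)$ with $j-k-1=n$, namely
\[
[\lambda^n]\text{ on LHS} \;=\; \sum_{j=0}^{m} p_j\, U_{j-n-1}.
\]
Equating this to the vanishing coefficient of $\lambda^n$ on the right whenever $n\le -1$ or $n\ge m$ yields
\[
\sum_{j=0}^{m} p_j\, U_{j-n-1} = 0 \quad \text{for } n\le -1 \text{ or } n\ge m.
\]

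Finally, I would perform the re-indexing $k := m-n-1$, which bijectively maps $\{n\le -1\}$ onto $\{k\ge m\}$ and $\{n\ge m\}$ onto $\{k\le -1\}$, and transforms $j-n-1$ into $k+j-m$. This yields exactly the claimed identity
\[
\sum_{j=0}^{m} p_j\, U_{k+j-m} = 0 \quad \text{for } k\ge m \text{ or } k\le -1.
\]
The only subtlety I foresee is the rigorous justification that one may compare Laurent coefficients term by term; this follows from the uniqueness of the Laurent expansion in the annulus of convergence guaranteed by Theorem~\ref{Laurent}, combined with the fact that the product series still converges there. Beyond that point, the argument is a straightforward bookkeeping of indices.
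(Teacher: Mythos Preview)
Your argument is correct. The adjugate identity $(D-\lambda E)^{-1}\,\mathrm{ch}_{D,E}(\lambda)=\mathrm{adj}(D-\lambda E)$ together with the fact that $\mathrm{adj}(D-\lambda E)$ is a matrix polynomial of degree at most $m-1$ in $\lambda$ gives exactly the vanishing of the Laurent coefficients you need, and your re-indexing $k=m-n-1$ is carried out cleanly. One small point worth making explicit: when $k\le -1$ (equivalently $n\ge m$) some of the indices $k+j-m$ may fall below $-\mu$, and the statement is to be read with the convention $U_\ell=0$ for $\ell<-\mu$; this is consistent with your coefficient extraction, since those terms simply do not occur in the series.

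As for the comparison: the paper does \emph{not} supply its own proof of this lemma. It is stated as a known generalization of the Cayley--Hamilton theorem and attributed to the reference of Lewis on the adjoint matrix and Cayley--Hamilton theorem for the pencil $sE-A$. Your adjugate-based derivation is precisely the classical route to such pencil Cayley--Hamilton identities, so while there is no in-paper proof to compare against line by line, your approach is the standard one and almost certainly coincides with what the cited source does. The only advantage of writing it out, as you have, is that it makes the result self-contained rather than deferring to an external reference.
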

With the notation $\mbox{ch}_{D,E}(U_k)$ in Lemma~\ref{Coro}, we have the following adaptation of \cite{Herndez1989333}[Theorem 1].
\begin{Lemma}\label{Chiang}
Given two regular $m$-square matrix pencils $D-\lambda E$ and $F-\lambda G$. Let the Laurent expansion of $(F-\lambda G)^{-1}$ be $(F-\lambda G)^{-1}=\frac{1}{\lambda}\sum\limits_{k=-\nu}^\infty V_k \lambda^{-k}$ with a nonnegative integer $\nu$. If the intersection of the spectra $\sigma(D-\lambda E)$ and $\sigma(F-\lambda G)$ is empty and the matrix $G$ is nonsingular, then $\mbox{ch}_{D,E}(V_m)$ is nonsingular.
\end{Lemma}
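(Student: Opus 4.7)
The plan is to exploit the recurrence hidden in the identity $(F-\lambda G)(F-\lambda G)^{-1}=I$ to express every Laurent coefficient $V_k$ as a matrix power applied to $V_0$, reduce $\mbox{ch}_{D,E}(V_m)$ to a clean matrix product, and then apply the spectral mapping theorem together with the non-intersection hypothesis to conclude nonsingularity.

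First I would substitute the Laurent series from Theorem~\ref{Laurent} into $(F-\lambda G)(F-\lambda G)^{-1}=I$ and match coefficients of each power of $\lambda$. Because $G$ is nonsingular, $\det(F-\lambda G)$ has degree exactly $m$, so $(F-\lambda G)^{-1}$ decays like $1/\lambda$ at infinity; this forces $\nu=0$, hence $V_k=0$ for all $k<0$. Matching the constant term then gives $-GV_0=I$, so $V_0=-G^{-1}$ is nonsingular. Matching the remaining coefficients yields the recurrence $GV_k=FV_{k-1}$ for $k\ge 1$, which, setting $M:=G^{-1}F$, gives the closed form $V_k=M^{k}V_0$ for every $k\ge 0$.

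Writing $\mbox{ch}_{D,E}(\lambda)=\sum_{j=0}^{m}p_j\lambda^j$ and specializing the definition from Lemma~\ref{Coro} at $k=m$,
\[
\mbox{ch}_{D,E}(V_m)=\sum_{j=0}^{m}p_j V_j=\sum_{j=0}^{m}p_j M^{j}V_0=P(M)\,V_0,
\]
where $P(M):=\sum_{j=0}^{m}p_j M^{j}$ denotes the scalar polynomial $\mbox{ch}_{D,E}$ evaluated at the matrix $M$. By the spectral mapping theorem, $\sigma(P(M))=\{P(\mu):\mu\in\sigma(M)\}$, and because $G$ is invertible one has $\sigma(M)=\sigma(F-\lambda G)$. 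The hypothesis $\sigma(D-\lambda E)\cap\sigma(F-\lambda G)=\emptyset$ then forces $P(\mu)=\det(D-\mu E)\ne 0$ for every eigenvalue $\mu$ of $M$, so $P(M)$ is nonsingular. Combined with the invertibility of $V_0=-G^{-1}$, the product $\mbox{ch}_{D,E}(V_m)=P(M)V_0$ is nonsingular.

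The main technical obstacle is the coefficient bookkeeping in the Laurent identity: one has to confirm that $\nu=0$ under invertibility of $G$, that the recurrence propagates cleanly from $V_0$ through $V_m$ without any further inputs, and that the shifted index in the definition $\mbox{ch}_{D,E}(V_m)=\sum_{j} p_j V_{m+j-m}$ collapses to $\sum_{j} p_j V_j$ so that the substitution $V_j=M^jV_0$ factors out $V_0$ on the right. Once these points are pinned down, the remainder of the argument is a routine application of spectral mapping.
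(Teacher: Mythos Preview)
Your argument is correct and takes a genuinely different route from the paper's. The paper reduces both pencils to upper-triangular form via the generalized Schur decomposition, reads off the diagonal entries of each $V_k$ from the scalar Laurent expansions of $1/(f_{ii}-\lambda g_{ii})$, and then checks directly that every diagonal entry $\Delta_{m,i}$ of $\mbox{ch}_{D,E}(V_m)$ equals $-g_{ii}^{-1}\prod_j(d_{jj}-e_{jj}f_{ii}/g_{ii})$, which is nonzero precisely under the disjoint-spectrum and $G$ invertible hypotheses. You instead exploit the recurrence $GV_k=FV_{k-1}$ coming from $(F-\lambda G)(F-\lambda G)^{-1}=I$ to obtain the closed form $V_k=(G^{-1}F)^kV_0$, factor $\mbox{ch}_{D,E}(V_m)=P(G^{-1}F)\,V_0$, and finish with the spectral mapping theorem for polynomials together with $\sigma(G^{-1}F)=\sigma(F-\lambda G)$.

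Your approach is cleaner: it bypasses the Schur reduction entirely and makes transparent why invertibility of $G$ is exactly the hypothesis needed (it kills the principal part, gives an invertible $V_0$, and turns the pencil spectrum into an ordinary matrix spectrum). The paper's route, by contrast, yields explicit scalar formulas for the diagonal of $\mbox{ch}_{D,E}(V_m)$, which is informative but requires tacitly checking that the Schur change of basis transforms the Laurent coefficients and the relative characteristic polynomial compatibly (it does, by $V_k\mapsto Q^{-1}V_kP^{-1}$ and $p_j\mapsto \det(RS)\,p_j$), a point the paper does not spell out. One small stylistic remark: strictly speaking what you deduce is $V_k=0$ for $k<0$ rather than $\nu=0$, since Theorem~\ref{Laurent} does not assert minimality of $\nu$; but this does not affect the argument.
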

\begin{proof}
With the help of the generalized Schur decomposition, we may assume without loss of generality that both $D-\lambda E=(d_{ij}-\lambda e_{ij})_{m\times m}$ and $F-\lambda G=(f_{ij}-\lambda g_{ij})_{m\times m}$ are upper triangular matrix pencils.
%Fix $\lambda\in\mathbb{F}$ such that the matrix$K_\lambda:=(F-\lambda G)^{-1}$ exist.
%We have the following observations.
The proof can be given explicitly as follows.
\begin{itemize}
\item[1.]The inverse of an invertible upper triangular matrix $F-\lambda G$ is also upper triangular. The $i$th diagonal element of $Z_\lambda(F,-G)$ is ${1}/({f_{ii}-\lambda g_{ii}})$.
\item[2.]The Laurent expansion of the scalar function ${1}/({f_{ii}-\lambda g_{ii}})$ with respect to $\lambda$ is $\frac{1}{\lambda}\sum\limits_{k=-1}^{\infty} \alpha_k^{(i)}\lambda^{-k}$, where
    \[
    \alpha_k^{(i)}=\left\{
    \begin{array}{cc}
      -{f_{ii}^{k}}/{g_{ii}^{k+1}}, & g_{ii}\neq 0,\,k>-1,  \\
      1/f_{ii}, & g_{ii}=0,\,k=-1,\\
      0, & \mbox{otherwise}.
    \end{array}
    \right.
    \]
  We conclude that $V_k$ is an upper triangular matrix and the $i$th diagonal element of $V_k$ coincides with $\alpha_k^{(i)}$ by comparing terms of both sides of the Laurent expansion of $(F-\lambda G)^{-1}$.
  % \[
%    (V_k)_{ii}=\left\{
%    \begin{array}{cc}
%      \alpha_k, & g_{ii}\neq 0,  \\
%      1/f_{ii}, & g_{ii}=0,\,k=-1,\\
%      0, & g_{ii}=0,\,k>-1.
%    \end{array}
%    \right.
%    \]

 \item[3.]The $i$th diagonal element of $\mbox{ch}_{D,E}(V_k)$ is $\Delta_{k,i}:=\sum\limits_{j=0}^m p_j (V_{k+j-m})_{ii}$,
  which is equal to
\[
    \Delta_{k,i}=\left\{
    \begin{array}{cc}
      -{f_{ii}^{k-m}}/{g_{ii}^{k-m+1}}\prod\limits_{j=1}^m (d_{jj}-e_{jj}f_{jj}/g_{jj}), & g_{ii}\neq 0,\,k>-1,  \\
       p_j/f_{ii}, &  g_{ii}=0,\,k=-1+m-j,\\
       0, & \mbox{otherwise.}
    \end{array}
    \right.
\]

\end{itemize}
   When $k=m$, $\det(\mbox{ch}_{D,E}(V_m))=\prod\limits_{i=1}^m\Delta_{m,i}\neq 0$ follows from the assumptions that $d_{ii} g_{ii}\neq e_{ii} f_{ii}$, and  $g_{ii}\neq 0$, $1\leq i\leq m$. We complete the proof.
\end{proof}
Armed with the properties given in Theorem~\ref{Laurent}, Lemma~\ref{Coro} and Lemma~\ref{Chiang}, we are ready to derive a closed-form solution of Eq.~\eqref{eq} if $f$ is multiplication reversing. Applying the matrix operator $f$ to both sides of Eq.~\eqref{eq}, multiplying the new equation by $\lambda$, and subtracting the resulting equation from Eq.~\eqref{eq} we obtain
\begin{align}\label{t1}
&X(B-\lambda f(A))^{-1}+(A-\lambda f(B))^{-1}f(X)\nonumber\\
&=(A-\lambda f(B))^{-1}(C-\lambda f(C))(B-\lambda f(A))^{-1}.
\end{align}
 According to Theorem~\ref{Laurent} and the assumption that the regularity of two matrix pencils $A-\lambda f(B)$ and $B-\lambda f(A)$, there exist two sequences of matrices $\{ U_k\}$ and $\{ V_k\}$ such that
%\begin{subequations}
\begin{align}\label{Laurent2}
(A-\lambda f(B))^{-1}=\frac{1}{\lambda}\sum\limits_{k=-\mu_1}^\infty U_k \lambda^{-k},\quad
(B-\lambda f(A))^{-1}=\frac{1}{\lambda}\sum\limits_{k=-\mu_2}^\infty V_k \lambda^{-k}
\end{align}
%\end{subequations}
for $\lambda$ in a deleted neighborhood of zero, where $\mu_1$ and $\mu_2$ are two nonnegative integers.
For the sake of derivation, we without loss of generality assume that $\mu_1$ is grater than or equal to  $\mu_2$. %let $\mu=\max\{\mu_1,\mu_2\}$,
Substituting \eqref{Laurent2} into \eqref{t1} we have
\begin{align}\label{compar}
\frac{1}{\lambda}\sum\limits_{k=-\mu_1}^\infty (XU_k+V_k f(X)) \lambda^{-k}=\frac{1}{\lambda}\sum\limits_{k=-2\mu_1}^\infty T_k \lambda^{-k},
\end{align}
where $T_k=\sum\limits_{\tiny\begin{array}{c}s+t=k\\ s,t\geq-\mu_1\end{array}}U_s C V_t-\sum\limits_{\tiny\begin{array}{c}s+t=k-1\\ s,t\geq-\mu_1\end{array}}U_s f(C) V_t$, and $V_k$ vanishes if $k<-\mu_2$. Comparing both sides in \eqref{compar} we get
\begin{align}\label{compar1}
XU_j+V_j f(X)=T_j,\quad j\geq -\mu_1.
\end{align}
 We expand the relative characteristic polynomial of the pencil $B-\lambda f(A)$ as $\mbox{ch}_{B,f(A)}(\lambda)=\sum\limits_{j=0}^m p_j \lambda^j$
. For $j=0,\cdots,m$, multiplying both sides in \eqref{compar1} by $p_{j}$ and adding these resulting equations can lead to
\begin{align}\label{compar2}
X\mbox{ch}_{B,f(A)}(U_m)+\mbox{ch}_{B,f(A)}(V_m) f(X)=\sum\limits_{j=0}^m p_j T_{j}.
\end{align}
%by applying Theorem~\ref{Coro}.
  %Based on the regularity of the matrix pencil $A-\lambda f(B)$, and assume that the operator $f$ preserves invertibility. We note that either $A$ or $B$ is invertible if the uniquely solvable condition (b) is satisfied. %We may assume without loss of generality that $A$ is nonsingular.
  Assume that the operator $f$ preserves invertibility and the uniquely solvable condition (b) in Theorem~\ref{unique} holds. We note that either $A$ or $B$ is invertible based on the regularity of the matrix pencil $A-\lambda f(B)$. Finally, the main results can be constructed, as combined Lemma~\ref{Coro} with Lemma~\ref{Chiang}.
\begin{Theorem}\label{Chiang Main}
Suppose that the operator $f$ preserves invertibility. Under the uniquely solvable condition (b) in Theorem~\ref{unique} and the assumption that $A$ is nonsingular. The explicit solution of $X$ can be
formulated as $$X=\sum\limits_{j=0}^m p_j T_{j} (\mbox{ch}_{B,f(A)}(U_m))^{-1}.$$
\end{Theorem}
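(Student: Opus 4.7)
The plan is to extract the closed form directly from the already-derived identity~\eqref{compar2}
\[
X\,\mbox{ch}_{B,f(A)}(U_m)+\mbox{ch}_{B,f(A)}(V_m)\,f(X)=\sum_{j=0}^{m}p_j T_j,
\]
by annihilating the second summand on the left using Lemma~\ref{Coro} and by inverting the coefficient of $X$ using Lemma~\ref{Chiang}. The whole argument then becomes a matter of correctly identifying which of the Laurent sequences $\{U_k\}$ and $\{V_k\}$ is ``matched'' to the pencil $B-\lambda f(A)$ and which is not.

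First, I would observe that $\{V_k\}$ is by construction the Laurent tail of $(B-\lambda f(A))^{-1}$, so the pencil producing the sequence $\{V_k\}$ and the pencil whose relative characteristic polynomial $\mbox{ch}_{B,f(A)}$ is being evaluated coincide. Lemma~\ref{Coro}, specialized at $k=m$ (which satisfies $k\geq m$), therefore gives $\mbox{ch}_{B,f(A)}(V_m)=0$, and \eqref{compar2} collapses to
\[
X\,\mbox{ch}_{B,f(A)}(U_m)=\sum_{j=0}^{m}p_j T_j.
\]

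Second, I would invoke Lemma~\ref{Chiang} with $(D,E)=(B,f(A))$ and $(F,G)=(A,f(B))$, so that the sequence in the statement of that lemma corresponds to our $\{U_k\}$, which arises from the \emph{companion} pencil $A-\lambda f(B)$. The disjointness hypothesis $\sigma(D-\lambda E)\cap\sigma(F-\lambda G)=\emptyset$ is precisely condition~(b) of Theorem~\ref{unique}, and the nonsingularity of $G=f(B)$ required by Lemma~\ref{Chiang} is extracted from the regularity of the pencil $A-\lambda f(B)$ combined with the standing hypothesis that $A$ is nonsingular and $f$ preserves invertibility (as highlighted in the remark preceding the theorem statement). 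Lemma~\ref{Chiang} then yields that $\mbox{ch}_{B,f(A)}(U_m)$ is nonsingular, and right-multiplication by its inverse produces the closed form
\[
X=\Bigl(\sum_{j=0}^{m}p_j T_j\Bigr)\mbox{ch}_{B,f(A)}(U_m)^{-1}.
\]
Finally, Theorem~\ref{unique}(b) already guarantees that \eqref{eq} is uniquely solvable, and every solution of \eqref{eq} must in particular satisfy the reduced identity above, so this candidate is indeed that unique solution.

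The main difficulty I expect is the bookkeeping between the two Laurent sequences and the two pencils: one must recognise that $V_m$ is annihilated by Lemma~\ref{Coro} precisely because $\{V_k\}$ is the tail of the ``matched'' pencil $B-\lambda f(A)$, whereas $\{U_k\}$ is the tail of the ``mismatched'' companion pencil $A-\lambda f(B)$ and so falls under Lemma~\ref{Chiang}. A second, more delicate point is justifying the nonsingularity of $G=f(B)$ demanded by Lemma~\ref{Chiang} from the given hypothesis that $A$ (rather than $B$) is nonsingular; this is where the regularity remark preceding the theorem statement and the invertibility-preserving property of $f$ both enter.
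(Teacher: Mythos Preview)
Your proposal is essentially the paper's own argument spelled out in full: starting from the derived identity~\eqref{compar2}, you use Lemma~\ref{Coro} (with $k=m$) to annihilate $\mbox{ch}_{B,f(A)}(V_m)$ because $\{V_k\}$ is the Laurent tail of the \emph{matched} pencil $B-\lambda f(A)$, and then Lemma~\ref{Chiang} to invert $\mbox{ch}_{B,f(A)}(U_m)$ because $\{U_k\}$ is the Laurent tail of the companion pencil $A-\lambda f(B)$. The paper records this entire step only as ``the main results can be constructed, as combined Lemma~\ref{Coro} with Lemma~\ref{Chiang}'', and you have reproduced exactly that reasoning with the bookkeeping made explicit.

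One caution on the point you yourself flag as delicate. Your claimed justification that $G=f(B)$ is nonsingular --- namely, regularity of $A-\lambda f(B)$ together with $A$ nonsingular and $f$ invertibility-preserving --- does not actually work: take $A=I_m$ and $B=0$, where the pencil $A-\lambda f(B)=I_m$ is regular, $A$ is invertible, yet $f(B)=0$. The remark you cite says only that \emph{either} $A$ or $B$ is invertible, not both. The paper itself does not justify the nonsingularity of $f(B)$ either, so you are matching its level of rigor; but be aware that Lemma~\ref{Chiang} as stated genuinely needs $G=f(B)$ invertible, and the hypothesis ``$A$ is nonsingular'' in the theorem does not by itself supply this.
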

Furthermore, the alternative result of Theorem~\ref{Chiang Main} can be established if we consider the
equivalent equation~\eqref{eq2}. The detail is omitted here.
\section{Some further remarks on Eq.~\eqref{eq}}
Despite a lot of intense work in the past \cite{Chiang2014925,Chiang2012,Chiang2013AAA}, some observations and minor results are provided in this section. Generally speaking, the Kronecker product is a useful and powerful tool for finding the theoretical solution of systems of linear matrix equations \cite{Horn1994}. Especially, some closed-form solutions of a family of generalized Sylvester matrix equation are given by using a specific Kronecker matrix polynomials with the form $\sum\limits_{i=1}^n A_i^\top \otimes B_i$ \cite{Zhou2009327}. In the following, we sketch a big picture on how the Kronecker product has been solved Eq.~\eqref{eq}. With the help of the concept of the Kronecker product, Eq.~\eqref{eq} can be written as
\begin{align}\label{Kron}
(I_m\otimes A)\mbox{vec}(X)+(B^\top\otimes I_m)\mbox{vec}(f(X))=\mbox{vec}(C),
\end{align}
where $\mbox{vec}(X)$ stacks the columns of any $m$-square matrix $X$ onto a column vector. Assume that $f$ is a linear operator, it is easy to see that the composition of three maps $\mbox{vec}\circ f\circ \mbox{vec}^{-1}$ is also a linear transformation from $\mathbb{F}^{m^2\times 1}$ to itself. Then it is known that such a composition mapping $\mbox{vec}\circ f$ can be represented by a matrix $\mathcal{K}_f$ of size $m^2\times m^2$. That is,
\begin{align}\label{cond1}
\mbox{vec}(f(X))=\mathcal{K}_f\mbox{vec}(X),\quad  X\in\mathbb{F}^{m\times m}.
\end{align}
It is obviously that $\mathcal{K}_f$ is an identity matrix of size $2m$ if $f$ is an identity operator.
If $f$ is the transpose operator,  let us now introduce the commutation matrix \cite{1979} (or Kronecker permutation matrix) as follows
\begin{equation*}
\mathcal{K}_f=
\sum\limits_{1\leq i,j\leq m}e_je_i^\top \otimes e_ie_j^\top,
\end{equation*}
where $e_i$ denotes the $i$th column of the $m\times m$ identity
matrix $I_{m}$. As is well-known, its central  property is that it transforms $\mbox{vec}(X)$ into $\mbox{vec}(X^\top)$. However, Eq.~\eqref{cond1} is a contradiction when $f$ is a conjugate(-transpose) operator. To see this, for $m=1$ we choose a scalar $k\in\mathbb{C}$, it is impossible that $\overline{x}=kx$ for all $x\in\mathbb{C}$. The reason is that the conjugate(-transpose) operator is not a linear transformation over field $x\in\mathbb{C}$ since it is not homogeneous of degree 1.

The connection between linear operator $f$ and matrix representation is apparent from \eqref{cond1}.
Surprisingly, $\check{\mbox{S}}$emrl recently characterized the important property of a multiplication preserving/reversing operator, which was shown in \cite{emrl2006364} to
provide a connection between the geometry of matrices and the
structural results for order preserving maps.
\begin{Theorem}\label{emrl}
Suppose that $f$ is a bijective linear operator over $\mathbb{F}$, there exists a nonsingular $T_f$ such that
\begin{itemize}
\item[a.] $f(X)=T_fXT_f^{-1}$ if $f$ is multiplication preserving,
\end{itemize}
or
\begin{itemize}
\item[b.] $f(X)=T_fX^\top T_f^{-1}$ if $f$ is multiplication reserving,
\end{itemize}
for all $X\in\mathbb{F}^{m \times m}$.
\end{Theorem}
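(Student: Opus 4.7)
The plan is to reduce both statements to the Skolem--Noether theorem for the simple $\mathbb{F}$-algebra $M_{m}(\mathbb{F})$: a bijective linear multiplication-preserving (resp.\ multiplication-reversing) map is an $\mathbb{F}$-algebra automorphism (resp.\ anti-automorphism), and all such maps of $M_{m}(\mathbb{F})$ are classified by inner conjugation, possibly composed with transposition.

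I would first record the preliminary fact that $f(I)=I$. From $I=I\cdot I$ together with multiplicativity or anti-multiplicativity we obtain $f(I)=f(I)^{2}$, so $f(I)$ is idempotent. Choosing $X_{0}$ with $f(X_{0})=I$ by surjectivity and computing $f(I\cdot X_{0})$ yields $f(X_{0})=f(I)f(X_{0})$ in case (a) and $f(X_{0})=f(X_{0})f(I)$ in case (b); either way $f(I)=I$, and hence $f$ preserves invertibility.

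In case (a), $f$ is therefore an $\mathbb{F}$-algebra automorphism of $M_{m}(\mathbb{F})$, and I would invoke Skolem--Noether. Its proof I would carry out by twisting the standard module structure: equip $\mathbb{F}^{m}$ with the action $A\cdot_{f}v:=f(A)v$. Since $M_{m}(\mathbb{F})$ is a simple algebra (both for $\mathbb{F}=\mathbb{R}$ and $\mathbb{F}=\mathbb{C}$) admitting, up to isomorphism, a unique irreducible module, namely the standard column module $\mathbb{F}^{m}$, there is an $M_{m}(\mathbb{F})$-linear bijection from the twisted module to the standard one. Implementing this isomorphism by an invertible $T_{f}\in M_{m}(\mathbb{F})$ gives $f(A)T_{f}=T_{f}A$ for all $A$, that is, $f(A)=T_{f}AT_{f}^{-1}$.

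For case (b) I would reduce to case (a) by precomposing with transposition. Setting $g(X):=f(X^{\top})$, multiplication-reversal of $f$ yields
\begin{align*}
g(XY)=f((XY)^{\top})=f(Y^{\top}X^{\top})=f(X^{\top})f(Y^{\top})=g(X)g(Y),
\end{align*}
so $g$ is a bijective linear multiplication-preserving map. Case (a) supplies an invertible $T_{f}$ with $g(X)=T_{f}XT_{f}^{-1}$, whence $f(Y)=g(Y^{\top})=T_{f}Y^{\top}T_{f}^{-1}$. The principal obstacle is the Skolem--Noether step; everything else is essentially bookkeeping. The uniqueness of $T_{f}$ up to a nonzero scalar multiple then falls out of Schur's lemma applied to the intertwiner.
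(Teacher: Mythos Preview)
The paper does not supply its own proof of this theorem; it is quoted from \v{S}emrl's work \cite{emrl2006364} as a known result, with no argument given. Your proposal is a correct, self-contained proof via the classical Skolem--Noether route: once $f(I)=I$ is secured, $f$ is a unital $\mathbb{F}$-algebra automorphism (resp.\ anti-automorphism) of the central simple algebra $M_{m}(\mathbb{F})$, and every such automorphism is inner; the anti-automorphism case is then reduced to the first by precomposing with transposition. The twisted-module argument you sketch is exactly the standard proof of Skolem--Noether in this setting and works over both $\mathbb{R}$ and $\mathbb{C}$, so your write-up is in fact more informative than the bare citation in the paper.

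One minor bookkeeping point: with your convention that $T_{f}$ implements the isomorphism from the twisted module to the standard one, the intertwining relation reads $T_{f}\,f(A)=A\,T_{f}$, i.e.\ $f(A)=T_{f}^{-1}AT_{f}$, rather than $f(A)T_{f}=T_{f}A$ as written. Relabeling $T_{f}\mapsto T_{f}^{-1}$ recovers the stated form, so the conclusion is unaffected.
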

According to the Theorem~\ref{emrl} we focus on the identity operator and transpose operator if $f$ is a linear operator.  The solvability of original Eq.~\eqref{eq} is equivalent to the solvability of the generalized Sylvester-like matrix equation~\eqref{geq} with the identity or transpose operator $f$. In terms of the (generalized and periodic) Schur decomposition, QR decomposition and (generalized) singular value decomposition, Eq.~\eqref{geq} can be converted into an upper (or lower) triangular system~\cite{Chiang2012}. With the Kronecker product approach coupled with matrix representation \eqref{cond1}, the following result carries the full spectral information about the matrices in the left hand side of Eq.~\eqref{eq} as well as Eq.~\eqref{Kron}.
\begin{Proposition}\label{eig}
Given four $m$-square upper (or lower) triangular matrices $A$, $B$, $C$ and $D$ and define the $m^2$-square matrix
$\mathcal{P}:=A\otimes B+(C\otimes D)\mathcal{K}_f$, then
\begin{itemize}
\item[a.] $\sigma(\mathcal{P})=\{a_{ii}b_{jj}+c_{ii}d_{jj};1\leq i,j \leq m\}$ if $f$ is an identity operator,
\end{itemize}
and
\begin{itemize}
\item[b.] $\sigma(\mathcal{P})=\{a_{ii}b_{ii}+c_{ii}d_{ii};1\leq i \leq m\}\cup \{\sigma(\bb a_{ii}b_{jj} & c_{ii}d_{jj}\\ c_{jj}d_{ii} &a_{jj}b_{ii} \eb);1\leq i,j \leq m,i\neq j\}$ if $f$ is a transpose operator.
\end{itemize}
\end{Proposition}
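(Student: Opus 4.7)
Part (a) is immediate: with $f$ the identity, $\mathcal{K}_f=I_{m^2}$, so $\mathcal{P}=A\otimes B+C\otimes D$, and since the Kronecker product of two upper triangular matrices is upper triangular, $\mathcal{P}$ is upper triangular with diagonal entries $a_{ii}b_{jj}+c_{ii}d_{jj}$ indexed by all pairs $(i,j)$. This gives the claimed spectrum.

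For part (b), I would first translate the spectral problem back to the space $\mathbb{F}^{m\times m}$. Using the identities $(A\otimes B)\mbox{vec}(X)=\mbox{vec}(BXA^{\top})$ and $\mathcal{K}_f\mbox{vec}(X)=\mbox{vec}(X^{\top})$, the matrix $\mathcal{P}$ represents the linear map $L:X\mapsto BXA^{\top}+DX^{\top}C^{\top}$ on $\mathbb{F}^{m\times m}$. A direct entrywise calculation gives $(BE_{ij}A^{\top})_{k,\ell}=b_{ki}a_{\ell j}$ and $(DE_{ji}C^{\top})_{k,\ell}=d_{kj}c_{\ell i}$, where $E_{ij}=e_ie_j^{\top}$. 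By the upper-triangularity of $A,B,C,D$ these entries vanish unless $k\le i,\ \ell\le j$ or $k\le j,\ \ell\le i$; in either case $\max(k,\ell)\le \max(i,j)$, and inside the layer $\max(k,\ell)=\max(i,j)=:M$ the support is confined to the pair $\{E_{ij},E_{ji}\}$ together with the earlier pairs $\{E_{M,\ell},E_{\ell,M}\}$ for $\ell<\min(i,j)$ (and the singleton $E_{MM}$ itself only when $i=j$).

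This support structure suggests ordering the basis $\{E_{ij}\}$ first by $M=\max(i,j)$ ascending, then within each layer by placing the off-diagonal pairs $\{E_{M,j},E_{j,M}\}$ in increasing order of $j$ and the diagonal element $E_{MM}$ last. With this ordering, the matrix of $L$ becomes block upper triangular: a $1\times 1$ block equal to $a_{ii}b_{ii}+c_{ii}d_{ii}$ sits at each $E_{ii}$, and a $2\times 2$ block at each pair $\{E_{ij},E_{ji}\}$ with $i<j$. Reading off the coefficients of $L(E_{ij})$ and $L(E_{ji})$ against $E_{ij}$ and $E_{ji}$ and discarding the terms killed by upper-triangularity yields a block that is similar, via the row/column swap permutation, to $\bb a_{ii}b_{jj} & c_{ii}d_{jj}\\ c_{jj}d_{ii} & a_{jj}b_{ii}\eb$, exactly the matrix appearing in the statement.

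The main obstacle is the combinatorial bookkeeping for this block triangularization: one must verify that none of the images $L(E_{M,j})$, $L(E_{j,M})$, or $L(E_{MM})$ ever lands on a basis vector occurring strictly later in the chosen ordering. This reduces to the two support inclusions above, together with the observation that $E_{MM}$ is forced to sit at the end of its layer because $L(E_{MM})$ generally spreads across the whole $\max=M$ layer while $L(E_{M,j}),L(E_{j,M})$ with $j<M$ cannot produce an $(M,M)$-entry. Once this triangularization is in place, both spectral formulas in (a) and (b) follow immediately by reading off the diagonal blocks.
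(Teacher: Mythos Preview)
Your argument is correct and complete. Part~(a) is exactly the paper's one-line justification. For part~(b) the paper supplies no proof, merely writing that it ``follows the similar arguments as in \cite{Chiang2013AAA}[Lemma~3] (the tedious details are omitted)''; your explicit block-triangularization of the operator $X\mapsto BXA^{\top}+DX^{\top}C^{\top}$ in the basis $\{E_{ij}\}$, ordered by $\max(i,j)$ with $E_{MM}$ placed last in its layer, furnishes precisely those omitted details, and your support analysis of $L(E_{ij})$ is the right verification. One cosmetic remark: if within each layer you list the pair as $(E_{ji},E_{ij})$ with $i<j$ rather than $(E_{ij},E_{ji})$, the resulting $2\times2$ diagonal block is literally $\bb a_{ii}b_{jj} & c_{ii}d_{jj}\\ c_{jj}d_{ii} & a_{jj}b_{ii}\eb$, so the row/column-swap similarity step becomes unnecessary.
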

\begin{proof}
Part (a) immediately follows from the definition of the Kronecker product. The second conclusion follows the similar arguments as in \cite{Chiang2013AAA}[Lemma3] (the tedious details are omitted).
\end{proof}
\begin{example}\label{ex3}
Let $\sigma_m$ be the set which collects all permutations of the set $M=\{1,\cdots,m\}$. Further, let $\sigma_m^{(k)}$ be the $k$th element of $\sigma_m$ according to Lexicographical order and $P_k\in\mathbb{R}^{m \times m}$ be a permutation matrix whose $j$th row is $e_{\sigma^{(k)}_{m}}^\top$, where $1\leq k \leq m!$. For instance, $P_7=\bb 0 & 1 & 0 & 0\\ 1 & 0 & 0 & 0 \\ 0 & 0 & 1 & 0\\ 0 & 0 & 0 & 1 \eb$ with $m=4$.
Now, we consider a family of linear matrix operators $S=\{ f_k\}$, where $f_k:\mathbb{R}^{m\times m}\rightarrow\mathbb{R}^{m\times m}$ %permutes entries of $m\times m$ matrix $A$.
 is defined by $f_k(A)$ whose entries are permutation of entries of a given $m\times m$ matrix $A$.
 That is, there is a bijective function $\hat\sigma_k:M\times M\rightarrow M\times M$ such that $f_k(A)=(a_{\hat\sigma_k(i,j)})_{m\times m}$. Obviously the total number of $S$ is $(m^2)!$ and there are two subsets $S_1=\{f_k^{(P)}\}$ and $S_2=\{f_k^{(R)}\}$ of $S$ such that
\begin{align}\label{operator}
f_k^{(P)}(X)=P_k X P_k^\top\quad\mbox{and}\quad f_k^{(R)}(X)=P_k X^\top P_k^\top,
\end{align}
where $1\leq k \leq m!$. It follows from Theorem~\ref{emrl} that the set of all multiplication preserving (or reserving) operators in $S$ is exactly $S_1$(or $S_2$). Together with Proposition~\ref{eig}, we summarize the uniquely solvable condition of $f_k^{(P)}$ and $f_k^{(R)}$ in the following theorem:
\begin{Theorem}
The Sylvester-like matrix equation~\eqref{eq} associated with matrix operator $f_k^{(P)}$ or $f_k^{(R)}$ is uniquely solvable if and only if the following conditions hold,
\begin{itemize}
 \item[a.] $\sigma(P_k^\top A) \cap \sigma(P_k^\top B) =\phi$ if the matrix operator $f=f_k^{(P)}$ in \eqref{operator},
\end{itemize}
 and
 \begin{itemize}
  \item[b.]
 \begin{itemize}
\item[(1)]  $\lambda_1,\lambda_2\in\sigma(P_k^\top A-\lambda B^\top P_k)\backslash\{1\}$ implies that $\lambda_1\lambda_2\neq 1$, this definition also regards $0$ and $\infty$ as reciprocals of each other.
\item[(2)] $1$ can be an eigenvalue of the matrix pencil $P_k^\top A-\lambda B^\top P_k$, but must be simple.
\end{itemize}
if the matrix operator $f=f_k^{(R)}$ in \eqref{operator}, $1\leq k \leq m!$.
\end{itemize}
\end{Theorem}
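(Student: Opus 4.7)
The plan is to exploit Theorem~\ref{emrl}: since each $P_k$ is a real permutation with $P_k^{-1}=P_k^\top$, the operators $f_k^{(P)}$ and $f_k^{(R)}$ already appear in the canonical multiplication-preserving and multiplication-reversing forms. In each case, left-multiplying Eq.~\eqref{eq} by $P_k^\top$ reduces it to a familiar prototype equation whose unique solvability is already characterized.

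For part (a), this reduction sends $AX+P_kXP_k^\top B=C$ to the standard Sylvester equation $(P_k^\top A)X+X(P_k^\top B)=P_k^\top C$, using $P_k^\top P_k=I_m$. Pre-multiplication by $P_k^\top$ is invertible, so unique solvability transfers, and the Remark following Theorem~\ref{Eric0} yields exactly the spectral non-intersection condition in (a).

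For part (b) the same trick converts $AX+P_kX^\top P_k^\top B=C$ into the $\top$-Sylvester equation $\tilde AX+X^\top\tilde B=\tilde C$ with $\tilde A:=P_k^\top A$, $\tilde B:=P_k^\top B$, $\tilde C:=P_k^\top C$. Vectorizing and using the commutation matrix $\mathcal K$ to encode the transpose, this becomes $\mathcal P\,\mbox{vec}(X)=\mbox{vec}(\tilde C)$ with $\mathcal P=I_m\otimes\tilde A+(\tilde B^\top\otimes I_m)\mathcal K$. I would then apply the generalized Schur decomposition to the pencil $\tilde A-\lambda\tilde B^\top=P_k^\top A-\lambda B^\top P_k$, obtaining orthogonal $U,V$ that simultaneously upper-triangularize $\hat A:=U\tilde AV$ and $\hat B^\top:=U\tilde B^\top V$; a matching change of variable for $X$ preserves the rank of $\mathcal P$. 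Proposition~\ref{eig}(b), applied with $A=I$, $B=\hat A$, $C=\hat B^\top$, $D=I$, then lists $\sigma(\mathcal P)$ as $\{\hat a_{ii}+\hat b_{ii}:1\le i\le m\}$ together with the spectra of the $2\times 2$ blocks $\bb \hat a_{jj} & \hat b_{ii} \\ \hat b_{jj} & \hat a_{ii} \eb$ for $i\ne j$.

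The final and main-obstacle step is translating $0\notin\sigma(\mathcal P)$ into spectral conditions on the pencil $P_k^\top A-\lambda B^\top P_k$. Letting $\lambda_i=\hat a_{ii}/\hat b_{ii}$ denote its (possibly infinite) eigenvalues, a diagonal value $\hat a_{ii}+\hat b_{ii}$ vanishes iff $\lambda_i=-1$, while a block determinant $\hat a_{ii}\hat a_{jj}-\hat b_{ii}\hat b_{jj}$ vanishes iff $\lambda_i\lambda_j=1$; both obstructions are packaged into the single reciprocal-free statement (b)(1) on $\sigma\setminus\{1\}$, with the $0\leftrightarrow\infty$ convention covering infinite eigenvalues. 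The subtlety lies at $\lambda=1$: a simple eigenvalue $\lambda_{i_0}=1$ contributes only the harmless diagonal value $2\hat a_{i_0 i_0}\ne0$ and mixed block determinants $\hat a_{i_0 i_0}(\hat a_{jj}-\hat b_{jj})$ that are nonzero iff $\lambda_j\ne1$, whereas two indices both carrying $\lambda=1$ force a vanishing block determinant, yielding (b)(2). Carrying out this case analysis together with the infinite-eigenvalue bookkeeping is the only genuinely delicate part of the argument; everything else is a direct application of results already established in the paper.
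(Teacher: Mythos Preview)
Your approach is essentially the paper's own: the theorem is stated there simply as a consequence of Proposition~\ref{eig} combined with the preceding discussion (reduction to the identity/transpose prototype via left-multiplication by $P_k^\top$, in the spirit of Theorem~\ref{emrl}, followed by generalized-Schur triangularization), and you fill in exactly these steps, including the $2\times2$ block-determinant case analysis that isolates the role of the eigenvalue $1$. Your only deviation is the shortcut in part~(a), where you invoke the standard Sylvester criterion (the Remark after Theorem~\ref{Eric0}) directly rather than passing through the Kronecker formulation and Proposition~\ref{eig}(a); this is cosmetic, and one small slip---writing $\sigma(\mathcal P)$ where strictly you have only computed the spectrum of the equivalence-transformed matrix---is harmless since only invertibility is at stake.
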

\end{example}
\section{Concluding Remark}
% 1210
The paper studies a class of Sylvester-like matrix equations.
%, for a vast array of applications in palindromic eigenvalue problem, optimal control, etc.
 We present some useful sufficient conditions for the solvability of this linear matrix equation. In addition,  we give the expressions of the explicit solutions of the equations when their solvable conditions are satisfied. A closed-form solution is expressed in terms of the expansion of Laurent series of a regular matrix pencil. The closed-form solutions allow us to calculate the theoretical solution of this linear matrix equation.
 Essentially, a linear operator $f$ with property (2a) can be viewed as the identity operator and a linear operator  $f$ with property (2b) can be regarded as the transpose operator. As mentioned before, the standard Sylvester matrix equation and the transpose Sylvester matrix equation have a special interest in control theory due to the important role in linear continuous-time system theory. Such a mark may bring much advantage in some theoretical analysis and designing numerical algorithms related to the Sylvester-like matrix equation.

\section*{Appendix : $f$-Palindromic Linearization $\lambda F(\mathcal{Z})+\mathcal{Z}$}%\footnote{***FOR NCTS EYES ONLY}}
First, let $M$ be a $2m$-square matrix partitioned as $M=\bb M_1 & M_2\\M_3 & M_4\eb$. The auxiliary matrix operator $F:\mathbb{F}^{m\times m}\rightarrow \mathbb{F}^{m\times m}$ associated with a multiplication reversing operator $f$  is defined as follows:
 $$F(M)=\bb f(M_1) & f(M_3)\\f(M_2) & f(M_4)\eb.$$
It is easy to see that $F$ is a multiplication reversing operator. Furthermore, QEP~\eqref{QEP} can be solved using various linearizations, such as the following
$f$-palindromic linearization of the form
\begin{align}\label{LEP}
\lambda F(\mathcal{Z})+\mathcal{Z},
\end{align}
with $2m$-square matrix $\mathcal{Z}$, where $\mathcal{Z}$ has the partitioned form
\begin{align*}
\mathcal{Z}=\bb A_0 & -A_1-A_2\\ A_0 & A_0 \eb.
\end{align*}
A direct computation yields
\[
(\lambda F(\mathcal{Z})+\mathcal{Z})\bb x\\-\lambda x \eb=0.
\]
We rewrite $\mathcal{Z}$ as $\mathcal{Z}=\bb A & B\\ C & D \eb$ and $\mathcal{X}:=\bb I_m & 0 \\ X & I_m \eb$. By  applying the ``$F$-congruence'' to the ``$F$-palindromic matrix pencil'' \eqref{LEP} we have
\[ \mathcal{X} (\lambda F(\mathcal{Z})+ \mathcal{Z}) F(\mathcal{X}) = \]
\[ \bb \lambda A + f(A) & \lambda (A f(X) + B) + f(XA + C) \\ \lambda (XA + C) + f(A f(X) + B) & \lambda \mathcal{R}(X) + f(\mathcal{R}(X)) \eb \]
with the so-called $f$-Riccati matrix equation,
\[ \mathcal{R}(X) \equiv XAf(X) + XB + Cf(X) + D \ . \]
If we can solve this generalized quadratic matrix equation
\[ \mathcal{R}(X) = 0 \ , \]
then the $f$-palindromic linearization can be ``square-rooted''. We thus have to solve the generalized eigenvalue problem for the pencil $\lambda (A f(X) + B) + f(XA + C)$, with the reciprocal eigenvalues in $\lambda (XA + C) + f(A f(X) + B)$ obtained for free. The Newton's method for the solution of the $f$-Riccati equation leads to the iterative process.
%When the $f$-Riccati equation is applied by Newton's method, which lead to the iterative process
\[
\Delta X_{k+1} (Af(X_k)+B)+(X_k+C) f(\Delta X_{k+1})=-\mathcal{R}(X_k),\quad \Delta X_{k+1}:=X_{k+1}-X_k,
\]
then the Sylvester-like matrix equation~\eqref{eq} need to be solved in each iteration.

\section*{Acknowledgement}
The author wishes to thank the Editor and two anonymous referees for many interesting and valuable suggestions on the manuscript. This research work was partially supported by the Ministry of Science and Technology and the National Center for Theoretical Sciences in Taiwan. The author would like to thank the support from Ministry of Science and Technology under the grant number NSC 102-2115-M-150-002.

%
%%
%\bibliographystyle{elsarticle-num}
%\bibliography{LinearEq}

\begin{thebibliography}{10}
\expandafter\ifx\csname url\endcsname\relax
  \def\url#1{\texttt{#1}}\fi
\expandafter\ifx\csname urlprefix\endcsname\relax\def\urlprefix{URL }\fi
\expandafter\ifx\csname href\endcsname\relax
  \def\href#1#2{#2} \def\path#1{#1}\fi

\bibitem{Mehrmann91}
V.~Mehrmann, The Autonomous Linear Quadratic Control Problem, Springer-Verlag, Berlin, 1991.

\bibitem{KwaSiv1972}
H.~Kwakernaak, R.~Sivan, Linear Optimal Control Systems, Wiley-Interscience,
  New York, 1972.

\bibitem{Bini2012}
D.~A. Bini, B.~Iannazzo, B.~Meini, Numerical Solution of Algebraic {R}iccati
  Equations, Vol.~9 of Fundamentals of Algorithms, Society for Industrial and
  Applied Mathematics (SIAM), Philadelphia, PA, 2012.

\bibitem{Lancaster1995}
P.~Lancaster, L.~Rodman, Algebraic Riccati Equations, Clarendon Press, Oxford
  University Press, New York, 1995.

\bibitem{Simoncini14}
V.~Simoncini, Computational methods for linear matrix equations (Survey article), Technical Report,
  Dipartimento di Matematica, Universita di Bologna, Italy (2014).

\bibitem{emrl2006364}
P.~$\check{\mbox{S}}$emrl,
  \href{http://www.sciencedirect.com/science/article/pii/S0024379505001588}{Maps
  on matrix spaces}, Linear Algebra and its Applications 413~(2–3) (2006) 364
  -- 393, Special Issue on the 11th Conference of the International Linear
  Algebra Society, Coimbra, 2004 11th Conference of the International Linear
  Algebra Society, Coimbra, 2004.
\newblock \href {http://dx.doi.org/http://dx.doi.org/10.1016/j.laa.2005.03.011}
  {\path{doi:http://dx.doi.org/10.1016/j.laa.2005.03.011}}.
\newline\urlprefix\url{http://www.sciencedirect.com/science/article/pii/S0024379505001588}

\bibitem{S84}
J.~J. Sylvester, Sur l'equations en matrices $px = xq$, Comptes Rendus Acad.
  Sci. Paris 99~(2) (1884) 67--71,115--116.

\bibitem{Horn1994}
R.~A. Horn, C.~R. Johnson, Topics in Matrix Analysis, Cambridge University
  Press, Cambridge, 1994, Corrected reprint of the 1991 original.

\bibitem{doi:10.1137/0609029}
J.~Bevis, F.~Hall, R.~Hartwig, \href{http://dx.doi.org/10.1137/0609029}{The
  matrix equation {$A\overline{X} - XB = C$} and its special cases}, SIAM
  Journal on Matrix Analysis and Applications 9~(3) (1988) 348--359.
\newblock \href {http://arxiv.org/abs/http://dx.doi.org/10.1137/0609029}
  {\path{arXiv:http://dx.doi.org/10.1137/0609029}}, \href
  {http://dx.doi.org/10.1137/0609029} {\path{doi:10.1137/0609029}}.
\newline\urlprefix\url{http://dx.doi.org/10.1137/0609029}

\bibitem{Hong1988143}
Y.~Hong, R.~A. Horn,
  \href{http://www.sciencedirect.com/science/article/pii/0024379588903242}{A
  canonical form for matrices under consimilarity}, Linear Algebra and its
  Applications 102~(0) (1988) 143 -- 168.
\newblock \href
  {http://dx.doi.org/http://dx.doi.org/10.1016/0024-3795(88)90324-2}
  {\path{doi:http://dx.doi.org/10.1016/0024-3795(88)90324-2}}.
\newline\urlprefix\url{http://www.sciencedirect.com/science/article/pii/0024379588903242}

\bibitem{Fabbender20081425}
H.~Fa{\ss}bender, K.~Ikramov,
  \href{http://www.sciencedirect.com/science/article/pii/S0024379508001456}{Conjugate-normal
  matrices: A survey}, Linear Algebra and its Applications 429~(7) (2008) 1425
  -- 1441.
\newblock \href {http://dx.doi.org/http://dx.doi.org/10.1016/j.laa.2008.03.009}
  {\path{doi:http://dx.doi.org/10.1016/j.laa.2008.03.009}}.
\newline\urlprefix\url{http://www.sciencedirect.com/science/article/pii/S0024379508001456}

\bibitem{doi:10.1137/050637601}
R.~Byers, D.~Kressner, \href{http://dx.doi.org/10.1137/050637601}{Structured
  condition numbers for invariant subspaces}, SIAM Journal on Matrix Analysis
  and Applications 28~(2) (2006) 326--347.
\newblock \href {http://arxiv.org/abs/http://dx.doi.org/10.1137/050637601}
  {\path{arXiv:http://dx.doi.org/10.1137/050637601}}, \href
  {http://dx.doi.org/10.1137/050637601} {\path{doi:10.1137/050637601}}.
\newline\urlprefix\url{http://dx.doi.org/10.1137/050637601}

\bibitem{Ipsen04}
C.~F. Ipsen, Accurate eigenvalues for fast trains, SIAM News 37~(2004).

\bibitem{Hilliges2004}
A.~Hilliges, C.~Mehl, V.~Mehrmann, On the solution of palindromic eigenvalue
  problems, in: Proceedings of the 4th European Congress on Computational
  Methods in Applied Sciences and Engineering (ECCOMAS), Jyv\"{a}skyl\"{a},
  Finland, 2004.

\bibitem{Chu2010}
E.~K.-w. Chu, T.-M. Huang, W.-W. Lin, C.-T. Wu, Palindromic eigenvalue
  problems: A brief survey, Taiwanese Journal of Mathematics 14~(3A) (2010) 743--779.

\bibitem{za06}
S.~{Z}aglmayr, Eigenvalue Problems in SAW-Filter Simulations, Diplomarbeit,
  Institute of Computational Mathematics, Johannes Kepler University Linz,
  2002.

\bibitem{Zhou2011}
B.~Zhou, J.~Lam, G.-R. Duan,
  \href{http://dx.doi.org/10.1016/j.laa.2011.03.003}{Toward solution of matrix
  equation {$X=Af(X)B+C$}}, Linear Algebra and its Applications 435~(6) (2011) 1370--1398.
\newblock \href {http://dx.doi.org/10.1016/j.laa.2011.03.003}
  {\path{doi:10.1016/j.laa.2011.03.003}}.
\newline\urlprefix\url{http://dx.doi.org/10.1016/j.laa.2011.03.003}

\bibitem{Li2014546}
Z.-Y. Li, B.~Zhou, J.~Lam,
  \href{http://www.sciencedirect.com/science/article/pii/S0096300314005062}{Towards
  positive definite solutions of a class of nonlinear matrix equations},
  Applied Mathematics and Computation 237~(0) (2014) 546 -- 559.
\newblock \href {http://dx.doi.org/http://dx.doi.org/10.1016/j.amc.2014.03.135}
  {\path{doi:http://dx.doi.org/10.1016/j.amc.2014.03.135}}.
\newline\urlprefix\url{http://www.sciencedirect.com/science/article/pii/S0096300314005062}

\bibitem{Chiang2014925}
C.-Y. Chiang,
  \href{http://www.sciencedirect.com/science/article/pii/S0096300314010182}{On
  the solution of the linear matrix equation {$X=Af(X)B+C$}}, Applied
  Mathematics and Computation 244~(0) (2014) 925 -- 935.
\newblock \href {http://dx.doi.org/http://dx.doi.org/10.1016/j.amc.2014.07.061}
  {\path{doi:http://dx.doi.org/10.1016/j.amc.2014.07.061}}.
\newline\urlprefix\url{http://www.sciencedirect.com/science/article/pii/S0096300314010182}

\bibitem{Chu1987}
K.-w.~E. Chu, \href{http://dx.doi.org/10.1016/S0024-3795(87)90314-4}{The
  solution of the matrix equations {$AXB-CXD=E$} and {$(YA-DZ,YC-BZ)=(E,F)$}},
  Linear Algebra and its Applications 93 (1987) 93--105.
\newblock \href {http://dx.doi.org/10.1016/S0024-3795(87)90314-4}
  {\path{doi:10.1016/S0024-3795(87)90314-4}}.
\newline\urlprefix\url{http://dx.doi.org/10.1016/S0024-3795(87)90314-4}

\bibitem{Herndez1989333}
V.~Hern$\acute{a}$ndez, M.~Gass$\acute{a}$,
  \href{http://www.sciencedirect.com/science/article/pii/0024379589907088}{Explicit
  solution of the matrix equation {$AXB-CXD=E$}}, Linear Algebra and its
  Applications 121~(0) (1989) 333 -- 344.
\newblock \href
  {http://dx.doi.org/http://dx.doi.org/10.1016/0024-3795(89)90708-8}
  {\path{doi:http://dx.doi.org/10.1016/0024-3795(89)90708-8}}.
\newline\urlprefix\url{http://www.sciencedirect.com/science/article/pii/0024379589907088}

\bibitem{Zhou09}
B.~Zhou, G.-R. Duan,
  \href{http://imamci.oxfordjournals.org/content/26/1/59.abstract}{Parametric
  solutions to the generalized discrete {S}ylvester matrix equation {$MXN-X=TY$}
  and their applications}, IMA Journal of Mathematical Control and Information
  26~(1) (2009) 59--78.
\newblock \href
  %{http://arxiv.org/abs/http://imamci.oxfordjournals.org/content/26/1/59.full.pdf+html}
  %{\path{arXiv:http://imamci.oxfordjournals.org/content/26/1/59.full.pdf+html}},
  \href {http://dx.doi.org/10.1093/imamci/dnn013}
  %{\path{doi:10.1093/imamci/dnn013}}.
\newline\urlprefix\url{http://imamci.oxfordjournals.org/content/26/1/59.abstract}

\bibitem{Jameson1968}
A.~Jameson, \href{http://www.jstor.org/stable/2099227}{Solution of the equation
  ${AX + XB = C}$ by inversion of an ${M \times M}$ or ${N \times N}$ matrix},
  SIAM Journal on Applied Mathematics 16~(5) (1968) 1020--1023.
\newline\urlprefix\url{http://www.jstor.org/stable/2099227}

\bibitem{6595170}
A.-G. Wu, \href{http://dx.doi.org/10.1016/10.1049/iet-cta.2013.0075}{Explicit
  solutions to the matrix equation ${E\overline{X}F - AX = C}$}, IET Control Theory and
  Applications 7~(12) (2013) 1589--1598.
\newblock \href {http://dx.doi.org/10.1049/iet-cta.2013.0075}
  {\path{doi:10.1049/iet-cta.2013.0075}}.
\newline\urlprefix\url{http://dx.doi.org/10.1016/10.1049/iet-cta.2013.0075}

\bibitem{Langenhop1971329}
C.~Langenhop,
  \href{http://www.sciencedirect.com/science/article/pii/0024379571900048}{The
  {L}aurent expansion for a nearly singular matrix}, Linear Algebra and its
  Applications 4~(4) (1971) 329 -- 340.
\newblock \href
  {http://dx.doi.org/http://dx.doi.org/10.1016/0024-3795(71)90004-8}
  {\path{doi:http://dx.doi.org/10.1016/0024-3795(71)90004-8}}.
\newline\urlprefix\url{http://www.sciencedirect.com/science/article/pii/0024379571900048}

\bibitem{Schweitzer1993237}
P.~Schweitzer, G.~Stewart,
  \href{http://www.sciencedirect.com/science/article/pii/002437959390435Q}{The
  {L}aurent expansion of pencils that are singular at the origin}, Linear
  Algebra and its Applications 183~(0) (1993) 237 -- 254.
\newblock \href
  {http://dx.doi.org/http://dx.doi.org/10.1016/0024-3795(93)90435-Q}
  {\path{doi:http://dx.doi.org/10.1016/0024-3795(93)90435-Q}}.
\newline\urlprefix\url{http://www.sciencedirect.com/science/article/pii/002437959390435Q}

\bibitem{Rothblum198133}
U.~G. Rothblum,
  \href{http://www.sciencedirect.com/science/article/pii/0024379581900069}{Resolvent
  expansions of matrices and applications}, Linear Algebra and its Applications
  38~(0) (1981) 33 -- 49.
\newblock \href
  {http://dx.doi.org/http://dx.doi.org/10.1016/0024-3795(81)90006-9}
  {\path{doi:http://dx.doi.org/10.1016/0024-3795(81)90006-9}}.
\newline\urlprefix\url{http://www.sciencedirect.com/science/article/pii/0024379581900069}

\bibitem{4047765}
F.~Lewis, Adjoint matrix, {B}$\acute{\mbox{e}}$zout theorem, {C}ayley-{H}amilton
  theorem, and {F}adeev's method for the matrix pencil $sE-A$, 1983. The 22nd IEEE Conference 1282--1288.
\newblock \href {http://dx.doi.org/10.1109/CDC.1983.269734}
  {\path{doi:10.1109/CDC.1983.269734}}.

\bibitem{Chiang2012}
C.-Y. Chiang, E.~K.-W. Chu, W.-W. Lin,
  \href{http://dx.doi.org/10.1016/j.amc.2012.01.065}{On the
  {$\star$}-{S}ylvester equation {$AX\pm X^\star B^\star=C$}}, Applied Mathematics and Computation 218~(17) (2012) 8393--8407.
\newblock \href {http://dx.doi.org/10.1016/j.amc.2012.01.065}
  {\path{doi:10.1016/j.amc.2012.01.065}}.
\newline\urlprefix\url{http://dx.doi.org/10.1016/j.amc.2012.01.065}

\bibitem{Chiang2013AAA}
C.-Y. Chiang, \href{http://www.hindawi.com/journals/aaa/2013/824641/}{A note on
  the $\top$-{S}tein matrix equation}, Abstract and Applied Analysis 2013~(Article ID
  824641), 8 pages.
\newline\urlprefix\url{http://www.hindawi.com/journals/aaa/2013/824641/}

\bibitem{Zhou2009327}
B.~Zhou, Z.-Y. Li, G.-R. Duan, Y.~Wang,
  \href{http://www.sciencedirect.com/science/article/pii/S0096300309001489}{Solutions
  to a family of matrix equations by using the {K}ronecker matrix polynomials},
  Applied Mathematics and Computation 212~(2) (2009) 327 -- 336.
\newblock \href {http://dx.doi.org/http://dx.doi.org/10.1016/j.amc.2009.02.021}
  {\path{doi:http://dx.doi.org/10.1016/j.amc.2009.02.021}}.
\newline\urlprefix\url{http://www.sciencedirect.com/science/article/pii/S0096300309001489}

\bibitem{1979}
J.~R. Magnus, H.~Neudecker, \href{http://www.jstor.org/stable/2958818}{The
  commutation matrix: Some properties and applications}, The Annals of
  Statistics 7~(2) (1979) pp. 381--394.
\newline\urlprefix\url{http://www.jstor.org/stable/2958818}

\end{thebibliography}

\end{document}